\def\L{{\mathcal{L}}} 
\def\M{{\mathcal{M}}}
\def\o{{\omega}}
\def\O{{\mathcal{O}}}
\def\Z{{\mathbb{Z}}}
\def\Spec{{\mathrm{Spec\; }}} 
\def\Pic{{\mathrm{Pic}}}
\newtheorem{thm}{Theorem}[section] 
\newtheorem{prop}[thm]{Proposition}
\newtheorem{lem}[thm]{Lemma}
\theoremstyle{definition}
\newtheorem{eg}[thm]{Example} 
\theoremstyle{remark}
\newtheorem{rem}[thm]{Remark}
\newtheorem*{acknowledgement}{Acknowledgments}
\title{A characterization of ordinary abelian varieties by the Frobenius push-forward \\ of the structure sheaf I\hspace{-1pt}I}
\author{Sho Ejiri and Akiyoshi Sannai}
\address{Graduate School of Mathematical Sciences, the University of Tokyo, 3-8-1 Komaba, Meguro-ku, Tokyo 153-8914, Japan.}
\email{ejiri@ms.u-tokyo.ac.jp}
\address{Research Institute for Mathematical Sciences, Kyoto University, Kyoto
606-8502, Japan}
\email{sannai@kurims.kyoto-u.ac.jp}
\begin{document}
\footnote[0]{2010 Mathematics Subject Classification. 14K05, 13A35. Key words and phrases. Frobenius splitting, ordinary abelian varieties.}
\tolerance = 9999
\maketitle
\markboth{SHO EJIRI AND AKIYOSHI SANNAI}{FROBENIUS PUSH-FORWARD OF THE STRUCTURE SHEAF I\hspace{-1pt}I} 
\begin{abstract}
In this paper, we prove that a smooth projective variety $X$ of characteristic $p>0$ is an ordinary abelian variety if and only if 
$K_X$ is pseudo-effective and 
$F_*^e\O_X$ splits into a direct sum of line bundles for an integer $e$ with $p^e>2$.
\end{abstract}
\section{Introduction}\label{section:intro}
We fix an algebraically closed field $k$ of characteristic $p>0$ and consider a projective variety $X$ over $k$.
The $\O_X$-module structure of the push-forward $F_*\O_X$ of the structure sheaf $\O_X$ along the Frobenius map $F$ on $X$ 
tells us much information about $X$.
Kunz proved that the smoothness of $X$ is equivalent to the local freeness of $F_*\O_X$~\cite{Kun76}. 
Mehta and Ramanathan showed that an $F$-split variety, 
a variety $X$ with splitting of the natural morphism $\O_X\to F_*\O_X$, 
has some pleasant properties~\cite{MR85}.
It is also known that, in some low-dimensional cases, 
the direct summands of $F_*\O_X$ generate the bounded derived category of $X$ 
\cite{Har15,OU13}. 
The second author and Tanaka found the following characterization of ordinary abelian varieties:
\begin{thm}[\textup{\cite[Theorem 1.1]{ST16}}]\label{thm:ST}
Let $X$ be a smooth projective variety over an algebraically closed field of characteristic $p>0$. 
Then $X$ is an ordinary abelian variety if and only if the following two conditions hold:
\begin{itemize} 
\item $F^e_*\O_X$ is isomorphic to a direct sum of line bundles for infinitely many $e>0$.
\item $K_X$ is pseudo-effective.
\end{itemize}
\end{thm}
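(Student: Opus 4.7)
The ``only if'' direction would follow from standard facts: an ordinary abelian variety $X$ of dimension $g$ has $K_X = 0$ (pseudo-effective), and the classical decomposition $F^e_*\O_X \cong \bigoplus_i L_i$ into $p^{eg}$ line bundles indexed by characters of the $p^e$-torsion subgroup scheme is valid for every $e \geq 1$.

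For the ``if'' direction, I would first observe that the decomposition $F^e_*\O_X \cong \bigoplus_i L_i$ forces some $L_i$ to equal $\O_X$, because the inclusion $\O_X \hookrightarrow F^e_*\O_X$ must factor through a summand. Hence this inclusion splits and $X$ is $F^e$-split, so in particular $F$-split. Grothendieck duality for Frobenius identifies $F$-splittings with nonzero sections of $\omega_X^{1-p}$, so $-(p-1)K_X$ is linearly equivalent to an effective divisor. Combined with pseudo-effectivity of $K_X$, via the BDPP-type duality between pseudo-effective divisors and movable curves, this would force $K_X \equiv 0$ numerically.

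Next I would study the Albanese morphism $a\colon X \to A := \mathrm{Alb}(X)$, with the goal of showing that $a$ is finite étale and surjective. The plan is to combine the numerical triviality of $K_X$ with positivity results for direct images of (pluri)canonical sheaves in positive characteristic (Hacon--Patakfalvi, Patakfalvi--Schwede, and related work) together with the strong constraints coming from the splitting hypothesis. Pushing forward along $a$ gives $a_*F^e_*\O_X = F^e_*(a_*\O_X) = F^e_*\O_A = \bigoplus_i a_*L_i$, so the Frobenius pushforward on $A$ inherits finer and finer decompositions as $e$ grows through the infinite set of allowed exponents. This profinite rigidity should force $a$ to have no wild ramification and to be étale; an étale-descent argument would then show that $X$ itself is an abelian variety. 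Ordinariness would finally follow from the matching of ranks: an abelian variety of dimension $g$ has $F_*\O_X$ splitting into $p^g$ line bundles exactly when $|\Pic(X)[p]| = p^g$, i.e., when $X$ is ordinary.

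The hardest step will be controlling the Albanese morphism from the splitting data: connecting the decompositions of $F^e_*\O_X$ to étaleness of $a$ requires a delicate interplay of positivity, descent, and the structure of Frobenius on abelian varieties. It is plausibly at precisely this point that the ``infinitely many $e$'' hypothesis enters essentially, since it supplies an inverse system of splittings that trivialize any would-be ramification in the limit. Weakening the hypothesis to a single $e$ with $p^e > 2$---as the present paper accomplishes---then represents the main new technical input and would presumably require a more direct route that avoids the profinite argument.
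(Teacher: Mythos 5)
Your ``only if'' direction and the opening reductions of the ``if'' direction (splitting of $\O_X\to F^e_*\O_X$, Grothendieck duality producing an effective divisor linearly equivalent to $(1-p^e)K_X$, hence $K_X\equiv 0$ once $K_X$ is pseudo-effective) agree with what \cite{ST16} actually does. But the heart of the argument is missing from your proposal. As the paper recalls in its introduction, the proof of Theorem~\ref{thm:ST} first shows that \emph{each direct summand of $F^e_*\O_X$ is a $p^e$-torsion line bundle} (this is where the pseudo-effectivity of $K_X$ is really used, via the determinant of $F^e_*\O_X$), so that $\#\Pic(X)[p^e]=p^{e\dim X}$. Since the torsion of the N\'eron--Severi group is bounded, the hypothesis that this happens for \emph{infinitely many} $e$ forces these torsion line bundles to be algebraically equivalent to zero; the resulting abundance of $p^e$-torsion inside $\Pic^0(X)$ bounds $\dim\mathrm{Alb}(X)$ from below by $\dim X$, and this is what makes the Albanese map generically finite, after which one shows it is an isomorphism. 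None of this mechanism appears in your sketch.

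In its place you offer two steps that do not work as stated. First, the identity $a_*F^e_*\O_X = F^e_*(a_*\O_X)=F^e_*\O_A$ presupposes $a_*\O_X\cong\O_A$, i.e.\ that the Albanese map is surjective with geometrically connected generic fibre; at that stage of the argument nothing rules out $a$ having lower-dimensional image (or being constant), and establishing surjectivity and generic finiteness of $a$ is precisely the step you are trying to carry out. Second, the assertion that ``profinite rigidity should force $a$ to have no wild ramification and to be \'etale'' is a hope rather than an argument: you have not identified any concrete way in which the decompositions of $F^e_*\O_X$ constrain $a$. The actual link is the one described above --- summands are torsion line bundles, infinitude of $e$ pushes them into $\Pic^0(X)$, and $\Pic^0(X)$ is dual to $\mathrm{Alb}(X)$. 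Your instinct about where the ``infinitely many $e$'' hypothesis enters is in the right spirit, but without the torsion-line-bundle step the proposal has a genuine gap exactly at the point you yourself flag as hardest.
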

It is natural to ask whether we can characterize ordinary abelian varieties by using only $F_*\O_X$.
The answer is no. 
Indeed, as described in Section~\ref{section:surf}, 
a non-abelian smooth projective surface $S$ of characteristic 2, which is called Igusa surface, 
has the property that $K_S$ is trivial and $F_*\O_S$ splits into a direct sum of line bundles.
However, we still have the same characterization by using only $F_*^e\O_X$ for $e=1$ or $2$.
The main theorem of this paper is the following:
\begin{thm}
\label{thm:ch_ab_intro} 
Let $X$ be a smooth projective variety over an algebraically closed field of characteristic $p>2$ $($resp. $p>0)$. 
Then $X$ is an ordinary abelian variety if and only if the following conditions hold: 
\begin{itemize}
\item $F_*\O_X$ $($resp. $F^2_*\O_X)$ is isomorphic to a direct sum of line bundles. 
\item $K_X$ is pseudo-effective.
\end{itemize}
\end{thm}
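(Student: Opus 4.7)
The plan is to reduce to the Sannai--Tanaka characterization (Theorem~\ref{thm:ST}) by upgrading the given single-step splitting hypothesis to a splitting of $F_*^{me}\O_X$ for every $m\ge 1$. The ``only if'' direction is standard: on an ordinary abelian variety $A$, the Frobenius $F^e\colon A\to A$ is an isogeny and $F_*^e\O_A$ decomposes as $\bigoplus_{\alpha\in A[p^e]}\alpha$, so it suffices to concentrate on the ``if'' direction. Throughout, fix the $e$ given by the hypothesis (so $p^e>2$) and set $n=\dim X$.

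First, I would record the immediate consequences of the hypothesis. Write $F_*^e\O_X=\bigoplus_{i=1}^{p^{en}} L_i$. The unit map $\O_X\hookrightarrow F_*^e\O_X$ together with the fact that $\mathrm{Hom}(\O_X,L_i)$ is nonzero for exactly one $i$ shows that one summand is $\O_X$, i.e.\ $X$ is $F$-split. Taking determinants and using the standard formula $\det(F_*^e\O_X)\cong\O_X\bigl(\tfrac{(1-p^e)}{2}K_X\bigr)$ (which requires $p$ odd in the first case; for the case $p=2$, $e=2$ one uses the analogous formula with a rescaled coefficient) gives a numerical relation
\[
\sum_{i} c_1(L_i) \equiv \tfrac{1-p^e}{2}\,K_X,
\]
and combining this with pseudo-effectivity of $K_X$ and stability/semi-positivity considerations for the $L_i$ (for instance, using that an $F$-split variety has trivial augmented base locus for $-K_X$-like objects, or applying Hodge-type arguments to the dual splitting) should force $K_X\equiv 0$. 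Once $K_X\equiv 0$ is in hand, each summand $L_i$ is numerically trivial.

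The central task is then to \emph{propagate} the splitting: show that $F_*^e L$ is a direct sum of line bundles whenever $L$ occurs as a summand of $F_*^{me}\O_X$ for some $m\ge 0$. By induction this yields splittings of $F_*^{me}\O_X$ for all $m$, and Theorem~\ref{thm:ST} then finishes the proof. For propagation, I would first show that every such $L$ is $p^{e'}$-torsion for some uniform $e'$ (using $K_X\equiv 0$ together with the structure of summands of iterated Frobenius push-forwards, exploiting the $\O_X$-algebra structure on $F_*^e\O_X$ and the multiplication maps $L_i\otimes L_j\to L_k$ coming from this algebra structure). For a torsion line bundle $L$, the projection formula identifies $F_*^e L$ with $F_*^e\O_X\otimes L'$ for a suitable $(p^e)$-th root $L'$ of $L$, provided such a root exists inside the additive subgroup of $\Pic(X)$ generated by the $L_i$; this is where the hypothesis $p^e>2$ is indispensable, since when $p^e=2$ the Igusa surface example mentioned in the paper shows that $(p^e)$-th roots of the requisite type need not exist, and the propagation breaks.

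The main obstacle will be this last step---controlling the $(p^e)$-th roots of the summand line bundles inside $\Pic(X)$ and verifying that $p^e>2$ is exactly the numerical input that makes the roots behave well. I expect the argument to use that the natural multiplication on $F_*^e\O_X$ endows the set of $L_i$'s (modulo $p^e$-torsion equivalence) with a group structure of order $p^{en}$, and that for $p^e\ge 3$ this group contains enough elements to serve as a full system of $p^e$-torsion characters, whereas for $p^e=2$ only an index-$2$ subgroup is visible and the Igusa obstruction appears. Once propagation is established, the remaining step of invoking Theorem~\ref{thm:ST} is immediate.
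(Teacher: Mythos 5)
Your ``propagation'' step is where the proof breaks, and the specific mechanism you propose for it is false as stated. To write $F^{e}_*L\cong F^{e}_*\O_X\otimes L'$ by the projection formula you need $(L')^{p^{e}}\cong L$, and you assert that the required root $L'$ lives ``inside the additive subgroup of $\Pic(X)$ generated by the $L_i$''. But under your hypotheses the summands $L_i$ are exactly the $p^e$-torsion line bundles (Proposition~\ref{prop:equiv}, via \cite[Lemmas 3.2, 3.3, 4.4]{ST16}), so the subgroup they generate is $\Pic(X)[p^e]$ itself, and a nontrivial element of a $p^e$-torsion group has no $p^e$-th root inside that group: $M^{p^e}\cong L$ together with $M^{p^e}\cong\O_X$ forces $L\cong\O_X$. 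Any genuine root must have exact order $p^{2e}$, and the existence of such roots for every $L\in\Pic(X)[p^e]$ is equivalent to $\#\Pic(X)[p^{2e}]=p^{2e\dim X}$, which by Proposition~\ref{prop:equiv} is equivalent to the splitting of $F^{2e}_*\O_X$ --- precisely the statement you are trying to propagate. So the bootstrap is circular. The Igusa surface shows the failure is real and not merely formal: there $\Pic(S)[2]\cong(\Z/2\Z)^{2}$ has the expected order, yet $M\mapsto M^{2}$ from $\Pic(S)[4]$ is not surjective onto it (the Picard scheme is non-reduced), and nothing in your outline explains why $p^e\ge3$ forbids the same phenomenon; the $\O_X$-algebra structure on $F^e_*\O_X$ only re-encodes the group $\Pic(X)[p^e]$ you already have and cannot manufacture the missing $p^{2e}$-torsion classes. (A secondary slip: $\det F^e_*\O_X$ is a \emph{positive} multiple of $K_X$ --- of degree $(p-1)(g-1)$ on a curve of genus $g$ --- not $\tfrac{1-p^e}{2}K_X$; the conclusion $K_X\equiv0$ is nonetheless correct and is already contained in \cite[Lemma 3.2]{ST16}.)

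For comparison, the paper never tries to reach Theorem~\ref{thm:ST}. It uses the $p$-torsion summands of $F_*\O_X$ to produce logarithmic one-forms $dg_i/g_i$ that trivialize $\Omega^1_X$ (Theorem~\ref{thm:ch_triv_intro}), invokes Mehta--Srinivas (Theorem~\ref{thm:MS}) to obtain a Galois \'etale cover $\pi:A\to X$ by an ordinary abelian variety with Galois group $G$ of order a power of $p$, uses Lemma~\ref{lem:et_cov} to see that every $\sigma\in G$ fixes every $p^e$-torsion line bundle on $A$, and only then consumes the hypothesis $p^e>2$, in Proposition~\ref{prop:auto}: a non-translation automorphism of $p$-power order fixing all of $\Pic(A)[p^e]$ forces $p^e=2$ (the model case being $\sigma=(-1)_A$ in characteristic $2$). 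That proposition is where the Igusa obstruction is actually isolated. Your outline bypasses the abelian cover entirely and therefore never engages with the point at which $p^e>2$ does any work; without a substitute for Proposition~\ref{prop:auto}, or an independent proof that $\Pic(X)[p^{2e}]$ has order $p^{2e\dim X}$, the argument does not close.
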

In order to compare with the proof of this theorem, we briefly recall that of Theorem~\ref{thm:ST}. 
We first show that each direct summand of $F^e_*\O_X$ is a $p^e$-torsion line bundle. 
Using these line bundles, we study the Albanese map $a_X$ of $X$. 
The assumption of infiniteness of $e$ ensures that the $p^e$-torsion line bundles are algebraically equivalent to zero. 
From this we obtain that $a_X$ is generically finite, and then show that it is an isomorphism. \par
In the proof of Theorem~\ref{thm:ch_ab_intro}, we also obtain $p$-torsion line bundles in the same way as above, 
but we use them to study the cotangent bundle $\Omega_X^1$ of $X$. 
We then obtain the following:
%
\begin{thm}
\label{thm:ch_triv_intro}
Let $X$ be a smooth projective variety over an algebraically closed field of characteristic $p>0$. 
Assume that the following conditions hold: 
\begin{itemize}
\item $F_*\O_X$ is isomorphic to a direct sum of line bundles. 
\item $K_X$ is pseudo-effective.
\end{itemize}
Then $\Omega_X^1$ is trivial.
\end{thm}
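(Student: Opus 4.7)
My plan is to extract enough global $1$-forms from the splitting of $F_*\O_X$ to globally generate $\Omega^1_X$, and then to use the pseudo-effectivity of $K_X$ to improve global generation to triviality. The main tool is the exterior derivative $d\colon F_*\O_X\to F_*\Omega^1_X$, which is $\O_X$-linear for the Frobenius-twisted module structures on both sides, together with Grothendieck duality $(F_*\O_X)^\vee\cong F_*\omega_X^{1-p}$.

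First, write $F_*\O_X=\O_X\oplus\bigoplus_{i=1}^{p^n-1}L_i$ with $n=\dim X$; note that $\O_X$ must appear as a summand since $h^0(F_*\O_X)=1$. Following the same argument as in the proof of Theorem~\ref{thm:ST}, each $L_i$ is $p$-torsion: the inclusion $L_i\hookrightarrow F_*\O_X$ produces by adjunction a nonzero section of $L_i^{-p}$, and dually $L_i^{-1}\hookrightarrow(F_*\O_X)^\vee\cong F_*\omega_X^{1-p}$ produces a nonzero section of $L_i^{p}\otimes\omega_X^{1-p}$. Combined with $K_X$ pseudo-effective, both $L_i^p$ and $L_i^{-p}$ are numerically trivial; then $L_i^{-p}$ being effective on the projective variety $X$ forces $L_i^p\cong\O_X$. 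Applying the dual part of the argument to the $\O_X$-summand additionally shows $-(p-1)K_X$ is effective, so $K_X\equiv 0$.

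Next, examine $d\colon F_*\O_X\to F_*\Omega^1_X$. Since $X$ is smooth, its kernel is the subsheaf $F^\sharp(\O_X)$ of $p$-th powers, and after adjusting the decomposition we may assume this agrees with the $\O_X$-summand. Hence $d$ induces an injection $\bigoplus_{i\ge 1}L_i\hookrightarrow F_*\Omega^1_X$; by $(F^*,F_*)$ adjunction this corresponds to a morphism $\bigoplus F^*L_i\to\Omega^1_X$, and since $F^*L_i\cong L_i^p\cong\O_X$ we obtain
\[
\O_X^{\,p^n-1}\longrightarrow\Omega^1_X.
\]
Surjectivity is local: for a system of coordinates $x_1,\dots,x_n$ each $dx_j=d(x_j)$ lies in the image of $d$, and the $dx_j$ span $\Omega^1_X$. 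Therefore $\Omega^1_X$ is globally generated.

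Taking $\bigwedge^n$ shows $\omega_X$ is also globally generated; a nonzero section has effective zero divisor numerically equivalent to $K_X\equiv 0$, which must vanish on the projective variety $X$, so $\omega_X\cong\O_X$. Finally, with $\Omega^1_X$ globally generated and $\det\Omega^1_X\cong\O_X$, choose $n$ global sections whose wedge is nonzero at the generic point (possible by global generation): their wedge lies in $H^0(\omega_X)=k$ and is a nonzero constant, which trivializes $\Omega^1_X$ everywhere. The main obstacle I anticipate is the careful bookkeeping of Frobenius-twisted module structures in verifying surjectivity of the adjoint map $\O_X^{p^n-1}\to\Omega^1_X$; once this is set up the $p$-torsion input follows the argument in~\cite{ST16}, and the final trivializations of $\omega_X$ and $\Omega^1_X$ are short positivity arguments.
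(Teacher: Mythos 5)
Your proof is correct, but it takes a genuinely different route from the paper's. The paper fixes an $\mathbb F_p$-basis $L_1,\dots,L_n$ of $\Pic(X)[p]$ (using Proposition~\ref{prop:equiv} to know $\#\Pic(X)[p]=p^{\dim X}$), attaches to each $L_i$ the logarithmic form $dg_i/g_i$ with $(g_i)=pL_i$, and proves these $n$ forms are generically independent via Lemma~\ref{lem:basis}, that is, via the covers $\sigma_J\colon X_J\to X$ obtained by adjoining $p$-th roots of the $g_j$; the determinant argument then finishes. You never leave $X$: you feed all of $F_*\O_X$ into the $\O_X$-linear differential $d\colon F_*\O_X\to F_*\Omega_X^1$, identify its image $B_X^1$ with $\bigoplus_{i\ge1}L_i$ by the $F$-splitting and Krull--Schmidt, and pass to the adjoint $\bigoplus_i F^*L_i\cong\O_X^{\,p^n-1}\to\Omega_X^1$, whose surjectivity is immediate because the image of the adjoint is the $\O_X$-module generated by the subset $B_X^1\subseteq\Omega_X^1$, which locally contains $dx_1,\dots,dx_n$. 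Thus the technical heart of the paper's proof (generic independence of $n$ chosen forms, Lemma~\ref{lem:basis}) is replaced by the trivial local surjectivity of $d$, at the cost of producing $p^n-1$ generators instead of $n$ and having to cut back down at the end using $\omega_X\cong\O_X$ --- which you do correctly; both proofs share the opening ($p$-torsion of the summands and $K_X\equiv0$, as in~\cite{ST16}) and the closing determinant trick. Two points you should reorder or make explicit, though neither is a substantive gap: first, $h^0(F_*\O_X)=1$ alone does not force $\O_X$ to be a summand --- you need to first show every summand is $p$-torsion (hence numerically trivial), after which the unique effective summand must be trivial; second, identifying $\ker d=F^\sharp(\O_X)$ with a complement of $\bigoplus_{i\ge1}L_i$ uses that $X$ is $F$-split, which follows because $H^0(X,F_*\O_X)$ is one-dimensional, so the unit $\O_X\to F_*\O_X$ is, up to scalar, the split inclusion of the trivial summand.
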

The key to the proof of this theorem is to associate a $p$-torsion line bundle with a logarithmic 1-form.
Let $L$ be a divisor on $X$ with $pL\sim0$ and 
$g$ be a rational function satisfying $(g)=pL$.
Then, by local calculation, we see that $dg/g$ is a global 1-form. 
We show that such global 1-forms generate $\Omega_X^1$ and form a basis of $\Omega_{K(X)/k}^1$, 
which means that $\Omega_X^1$ is trivial. \par
%
%
We conclude the introduction with an outline of the proof of Theorem~\ref{thm:ch_ab_intro}. 
Let $X$ be a non-abelian smooth projective variety with the property that 
$K_X$ is pseudo-effective and $F^e_*\O_X$ is a direct sum of line bundles.
As mentioned in Remark~\ref{rem:e to e-1}, $F_*\O_X$ is also a direct sum of line bundles,
so $\Omega_X^1$ is trivial by Theorem~\ref{thm:ch_triv_intro}. 
Then, we can use a result of Mehta and Srinivas (Theorem~\ref{thm:MS}), 
and obtain an ordinary abelian variety $A$ and a Galois \'etale morphism $\pi:A\to X$ 
whose Galois group $G$ is of order a power of $p$.
By Lemma~\ref{lem:et_cov}, we see that each $p^e$-torsion line bundle $\L$ on $A$ comes from $X$, 
so the action of $G$ preserves $\L$. 
Hence, by Proposition~\ref{prop:auto}, we can deduce $p^e=2$. 
\begin{small}
\begin{acknowledgement}
The first author wishes to express his gratitude to Professors Shunsuke Takagi and Zsolt Patakfalvi for fruitful discussions and valuable advice. He would like to thank Professor Nobuo Hara for effective comments and questions. Part of this work was carried out, with Preprint Seminar as a start, during his visit to Princeton University with support from The University of Tokyo/Princeton University Strategic Partnership Teaching and Research Collaboration Grant, and from the Program for Leading Graduate Schools, MEXT, Japan. He was also supported by JSPS KAKENHI Grant Number 15J09117.
The second author would like to thank Professor Shigeru Mukai for his warm encouragement and stimulating discussions. The second author was partially supported by
JSPS Grant-in-Aid for Young Scientists (B) 16K17581.
\end{acknowledgement}
\end{small}
\section{Notation}\label{section:notat}
Throughout this paper, we fix an algebraically closed field $k$ of characteristic $p>0$.
By a \textit{variety} we mean an integral separated scheme of finite type over $k$.
For a variety $X$, we denote by $F_X:X\to X$ the absolute Frobenius map of $X$. If there is no ambiguity, we write $F$ instead of $F_X$.
Let $m>0$ be an integer. 
A line bundle $\L$ on $X$ is said to be \textit{$m$-torsion} if $\L^m\cong\O_X$. 
A Cartier divisor $D$ on $X$ is said to be \textit{$m$-torsion} if $\O_X(D)$ is $m$-torsion. 
Let $\varphi:S\to T$ be a morphism between schemes. 
We denote by $\Pic(T)[\varphi]$ the kernel of the induced morphism $\varphi^*:\Pic(T)\to\Pic(S)$. 
Note that $\Pic(T)[F_T^e]$ forms the group of $p^e$-torsion line bundles on $T$. 
We often denote it by $\Pic(T)[p^e]$. 

\section{Line bundles trivialized by pullbacks}\label{section:lb}
Let $\pi:X\to Y$ be a surjective finite morphism of varieties. 
We first consider relationships between $\Pic(Y)[\pi]$ and the structure of $\pi_*\O_X$.
\begin{lem}\label{lem:decomp}
Let $\pi:X\to Y$ be a finite morphism between projective varieties such that 
the natural morphism $\O_Y\to \pi_*\O_X$ is injective and splits. 
Then $\#\Pic(Y)[\pi]\le\deg\pi$. Furthermore, if the equality holds 
then $\pi_*\O_X$ is isomorphic to the direct sum of all elements in $\Pic(Y)[\pi]$.
\end{lem}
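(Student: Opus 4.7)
The plan is to associate to each $\L \in \Pic(Y)[\pi]$ a canonical split embedding $\phi_\L : \L \hookrightarrow \pi_*\O_X$, show that the resulting $\phi_\L$'s are jointly linearly independent at the generic point, and then bound the number of such $\L$ by the generic rank $\deg\pi$ of $\pi_*\O_X$. To construct $\phi_\L$, I would use the projection formula together with $\pi^*\L\cong\O_X$ to obtain an isomorphism $\pi_*\O_X\otimes\L\cong\pi_*(\pi^*\L)\cong\pi_*\O_X$; tensoring the given splitting $\O_Y\hookrightarrow\pi_*\O_X$ with $\L$ and composing with this isomorphism yields $\phi_\L$, which is split by construction and, since $\mathrm{Hom}_{\O_Y}(\L,\pi_*\O_X)=H^0(X,\pi^*\L^{-1})=H^0(X,\O_X)=k$, is unique up to a nonzero scalar. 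Concretely, at the generic point, $\phi_\L$ is determined by a rational function $g_\L\in K(X)^{\times}$ with $\mathrm{div}_X(g_\L)=\pi^*D_\L$ for some Cartier divisor $D_\L$ representing $\L$.

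The crux of the proof is to show that the $g_\L$, as $\L$ ranges over $\Pic(Y)[\pi]$, are $K(Y)$-linearly independent in $K(X)$. Once this is established, the direct sum map $\bigoplus_\L \L\to\pi_*\O_X$ is injective at the generic point, and hence globally (both sides being torsion-free), so $\#\Pic(Y)[\pi]\le\dim_{K(Y)}K(X)=\deg\pi$. To prove the linear independence, I would pass to the Galois closure $\pi':Z\to Y$ of $X\to Y$, with Galois group $G$. For each $\sigma\in G$, $\mathrm{div}_Z(\sigma g_\L)=\mathrm{div}_Z(g_\L)=(\pi')^*D_\L$, so $\sigma(g_\L)/g_\L\in H^0(Z,\O_Z^{\times})=k^{\times}$ defines a character $\chi_\L:G\to k^{\times}$. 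Distinct $\L$'s give distinct characters: if $\chi_{\L_1}=\chi_{\L_2}$, then $g_{\L_1}/g_{\L_2}\in K(Z)^G=K(Y)$, which by injectivity of divisor pullback along the finite surjection $\pi'$ forces $D_{\L_1}-D_{\L_2}$ to be principal on $Y$, hence $\L_1=\L_2$. A standard Dedekind--Artin argument for distinct multiplicative characters then gives the required $K(Y)$-linear independence of $\{g_\L\}$ in $K(Z)$, and hence in $K(X)\subset K(Z)$.

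For the equality case $\#\Pic(Y)[\pi]=\deg\pi$, the injection $B:=\bigoplus_\L \phi_\L(\L)\hookrightarrow\pi_*\O_X$ is a generic isomorphism of torsion-free coherent sheaves. The subsheaf $B$ is in fact a sub-$\O_Y$-algebra, because the generator $g_\L g_{\L'}$ has divisor $\pi^*(D_\L+D_{\L'})$ and hence represents $\L\L'$, giving $\phi_\L(\L)\cdot\phi_{\L'}(\L')=\phi_{\L\L'}(\L\L')$; consequently $\Spec_Y B\to Y$ has degree $\deg\pi$ and the induced morphism $X\to\Spec_Y B$ is finite and birational, which should force $B=\pi_*\O_X$ and yield the desired decomposition. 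The main obstacle will be this last step: a generically-iso inclusion of torsion-free sheaves need not be an equality, and one must use both the algebra structure of $B$ and the splitting of $\O_Y\hookrightarrow\pi_*\O_X$ in an essential way to promote the generic isomorphism to an isomorphism of sheaves.
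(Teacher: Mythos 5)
There is a genuine gap at what you correctly identify as the crux. Your linear-independence argument passes to ``the Galois closure $\pi':Z\to Y$ of $X\to Y$'' and runs a Dedekind--Artin independence-of-characters argument on the Galois group $G$. But the lemma must be applied to finite morphisms for which $K(X)/K(Y)$ is purely inseparable: in this paper the relevant $\pi$ is $F^e$ (so the extension is $K(X)/K(X)^{p^e}$) or the map $\sigma_J$ obtained by adjoining $p$-th roots. In that situation there is no Galois closure in your sense, the automorphism group of the extension is trivial, every character $\chi_\L$ is trivial, and the argument that distinct $\L$ give distinct characters --- and hence linearly independent $g_\L$ --- collapses. (In the separable Galois case your argument is fine, but that case is not the one the paper needs.) The paper's proof sidesteps field theory entirely: each $\L\in\Pic(Y)[\pi]$ is a \emph{split} subsheaf of $\pi_*\O_X$ via the projection formula, and the Krull--Schmidt theorem for coherent sheaves on projective varieties (Atiyah) then forces the pairwise non-isomorphic line bundles $\L$ to occur among the indecomposable summands of $\pi_*\O_X$, so $\bigoplus_{\L}\L$ is a direct summand and $\#\Pic(Y)[\pi]\le\mathrm{rank}\,\pi_*\O_X=\deg\pi$. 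This uses the splitting hypothesis in an essential way, whereas your argument only uses it to build the maps $\phi_\L$.

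The same issue propagates to the equality case, which you leave incomplete (``should force $B=\pi_*\O_X$''). You are right that a generically isomorphic inclusion of torsion-free sheaves need not be an equality, and the algebra-structure detour does not obviously close this. Once one knows $\bigoplus_{\L}\L$ is a direct \emph{summand}, however, the quotient $(\pi_*\O_X)/(\bigoplus_{\L}\L)$ is isomorphic to the complementary summand, hence is a subsheaf of the torsion-free sheaf $\pi_*\O_X$; since it has rank $0$ when $\#\Pic(Y)[\pi]=\deg\pi$, it vanishes. So the missing ingredient in both halves of your proof is the Krull--Schmidt direct-summand statement, not a generic-rank computation.
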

\begin{proof}
For every $\L\in\Pic(Y)[\pi]$, by the projection formula, we have a morphism 
$$\L\cong \L\otimes\O_Y\to\L\otimes\pi_*\O_X\cong\pi_*\pi^*\L\cong\pi_*\O_X$$ that is injective and splits. 
Hence, we see that $\bigoplus_{\L\in\Pic(Y)[\pi]}\L$ is a direct summand of $\pi_*\O_X$ by the Krull--Schmidt theorem~\cite{Ati56}, 
so $\#\Pic(Y)[\pi]\le\mathrm{rank}\;\pi_*\O_X=\deg\pi$.
If the equality holds, then $(\pi_*\O_X)/(\bigoplus_{\L\in\Pic(Y)[\pi]}\L)$ is a torsion $\O_X$-submodule 
of the torsion free $\O_X$-module $\pi_*\O_X$, so it is zero. This is our assertion.
\end{proof}
\begin{prop}[\textup{\cite[Lemmas 3.2, 3.3 and 4.4]{ST16}}]\label{prop:equiv} 
Let $X$ be a smooth projective variety with pseudo-effective $K_X$ and $e$ be a positive integer. 
Then the following are equivalent: 
\begin{itemize} 
\item[(i)] ${F^e}_*\O_X$ is isomorphic to a direct sum of line bundles.  
\item[(ii)] $X$ is $F$-split and the number of $p^e$-torsion line bundles is equal to $p^{e\dim X}$.  
\end{itemize} 
\end{prop}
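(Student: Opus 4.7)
The plan is to prove the two implications separately, with (ii) $\Rightarrow$ (i) following directly from Lemma \ref{lem:decomp} and (i) $\Rightarrow$ (ii) proceeding in three steps.

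For (ii) $\Rightarrow$ (i), the $F$-splitting hypothesis supplies a splitting of $\O_X \to F^e_*\O_X$, so Lemma \ref{lem:decomp} applies to $\pi = F_X^e$. The equality $\#\Pic(X)[p^e] = p^{e\dim X} = \deg F_X^e$ puts us in the equality case of the lemma, yielding the desired decomposition of $F^e_*\O_X$ as the direct sum of all $p^e$-torsion line bundles.

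For (i) $\Rightarrow$ (ii), write $F^e_*\O_X = \bigoplus_{i=1}^{N} \L_i$ with $N = p^{en}$ and $n = \dim X$. The first step is to show $X$ is $F^e$-split (hence $F$-split). Since Frobenius is affine, $h^0(X, F^e_*\O_X) = h^0(X, \O_X) = 1$, so exactly one summand $\L_{i_0}$ has a nonzero global section. The natural map $\O_X \to F^e_*\O_X$ is locally $a \mapsto a^{p^e}$, hence nowhere vanishing; its projection to $\L_{i_0}$ must therefore be a nowhere vanishing section, forcing $\L_{i_0} \cong \O_X$, and projection onto this summand gives the splitting. The second step invokes pseudo-effectivity: the splitting is a nonzero element of $\mathrm{Hom}(F^e_*\O_X, \O_X) \cong H^0(X, \omega_X^{1-p^e})$, so $(1-p^e)K_X \sim D$ for some effective $D$; intersecting with an ample class and using that $(p^e-1)K_X$ is pseudo-effective forces $D = 0$, so $K_X$ is $(p^e-1)$-torsion.

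The third step is to show that each $\L_i$ is $p^e$-torsion, whence $\#\Pic(X)[p^e] = N = p^{en}$ will follow from Lemma \ref{lem:decomp}. For any $\M \in \Pic(X)[p^e]$ the projection formula yields $F^e_*\O_X \otimes \M \cong F^e_*(F^{e*}\M) \cong F^e_*\O_X$, so by Krull--Schmidt the group $\Pic(X)[p^e]$ acts freely on the multiset $\{\L_i\}$ by tensor product; in particular the multiplicity function on $\Pic(X)$ is constant on $\Pic(X)[p^e]$-cosets. Since $\L_{i_0} = \O_X$ occurs with multiplicity one, every $p^e$-torsion line bundle occurs exactly once among the $\L_i$. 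The main obstacle will be ruling out summands lying outside $\Pic(X)[p^e]$, i.e., proving that the $\Pic(X)[p^e]$-action is transitive. The intended approach is to exploit the $\O_X$-algebra structure on $F^e_*\O_X$---whose local model $\O_X[y_1, \ldots, y_n]/(y_i^{p^e} - x_i)$ carries a natural $(\Z/p^e)^n$-grading---together with Grothendieck--Serre duality (which gives $F^e_*\omega_X \cong \bigoplus \L_i^{-1} \otimes \omega_X$) and the torsion of $K_X$, in order to force a global grading compatible with the decomposition and thereby identify $\{\L_i\}$ with $\Pic(X)[p^e]$.
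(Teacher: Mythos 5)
Your direction (ii) $\Rightarrow$ (i) is correct and is exactly the paper's Lemma~\ref{lem:decomp} applied to $\pi=F_X^e$, and the first two steps of your (i) $\Rightarrow$ (ii) (extracting the splitting from the unique summand with a section, and deducing that $(p^e-1)K_X\sim 0$ from pseudo-effectivity) are also sound; note the paper itself does not prove this proposition but quotes it from \cite{ST16}, so the real comparison is with the lemmas cited there. The genuine gap is exactly where you flag it: the final step, showing that \emph{every} summand $\L_i$ of $F^e_*\O_X$ is $p^e$-torsion. Your Krull--Schmidt argument only shows that $\Pic(X)[p^e]$ embeds with multiplicity one into the multiset $\{\L_i\}$, i.e.\ $\#\Pic(X)[p^e]\le p^{e\dim X}$, and the ``intended approach'' you sketch for the reverse containment --- globalizing the $(\Z/p^e)^{\oplus n}$-grading of the local model $\O_X[y_1,\dots,y_n]/(y_i^{p^e}-x_i)$ --- does not work as stated: that grading depends on the choice of local coordinates, there is no reason it glues, and no reason the given global direct sum decomposition is compatible with it. As written the proof is incomplete at its decisive point.

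You do, however, already have every ingredient needed to close the gap, with no grading required. For each summand $\L_j$, the inclusion $\L_j\hookrightarrow F^e_*\O_X$ gives $0\ne\mathrm{Hom}(\L_j,F^e_*\O_X)\cong\mathrm{Hom}(F^{e*}\L_j,\O_X)\cong H^0(X,\L_j^{-p^e})$, while the projection $F^e_*\O_X\twoheadrightarrow\L_j$ gives, by Grothendieck duality and the projection formula, $0\ne\mathrm{Hom}(F^e_*\O_X,\L_j)\cong H^0(X,F^e_*(\omega_X^{1-p^e})\otimes\L_j)\cong H^0(X,\omega_X^{1-p^e}\otimes\L_j^{p^e})$. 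Using your Step 2 conclusion $\omega_X^{1-p^e}\cong\O_X$, both $\L_j^{p^e}$ and $\L_j^{-p^e}$ have nonzero sections; on an integral projective variety the product of two such sections is a nonzero constant, so $\L_j^{p^e}\cong\O_X$ for every $j$, and combined with your multiplicity-one statement this yields $\#\Pic(X)[p^e]=p^{e\dim X}$. (Equivalently, without first trivializing $\omega_X^{1-p^e}$: the effective divisors representing $-p^eL_j$ and $(1-p^e)K_X+p^eL_j$ sum to $(1-p^e)K_X$, and pseudo-effectivity of $K_X$ forces both to vanish; this is essentially the content of \cite[Lemma 3.2]{ST16}.)
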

\begin{proof} 
The implication (i)$\Rightarrow$(ii) (resp. (ii)$\Rightarrow$(i)) follows from~\cite[Lemmas 3.2 and 4.4]{ST16} (resp.~\cite[Lemma 3.3]{ST16}). 
Note that $\deg{F^e}=p^{e\dim X}$.  
\end{proof}
\begin{rem}\label{rem:e to e-1}
Let $X$ be a smooth projective variety with pseudo-effective $K_X$. 
By Proposition~\ref{prop:equiv}, we see that if ${F^e}_*\O_X$ is  
a direct sum of line bundles for some $e>0$, then so is ${F^{d}}_*\O_X$ for each $0< d\le e$.
\end{rem}
For a smooth variety $X$, we denote the kernel and the image of the $\O_X$-morphism $d:F_*\Omega_X^i\to F_*\Omega_X^{i+1}$ by 
$Z_X^i$ and $B_X^{i+1}$, respectively. By the Cartier isomorphism, we then have
$$0\to B_X^i\to Z_X^i\xrightarrow{C}\Omega_X^i\to0.$$
\begin{prop}[\textup{\cite[Proposition 4.3]{GK03}}]\label{prop:GK}
Let $X$ be a smooth complete variety for which all global 1-forms are closed and such that $C$ gives a bijection 
$$H^0(X,Z_X^1)\to H^0(X,\Omega_X^1).$$ Then there exists an isomorphism $$H^0(X,\Omega_X^1)\xrightarrow{\cong}\Pic(X)[p]\otimes_{\Z}k.$$
\end{prop}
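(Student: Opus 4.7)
My plan is to construct an explicit map $\alpha:\Pic(X)[p]\to H^0(X,\Omega_X^1)$ sending a $p$-torsion class to a logarithmic 1-form, identify its image with the Cartier-fixed subspace, and then extend scalars to $k$ via a Frobenius-descent argument enabled by the two hypotheses on $C$.

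Given $\L\in\Pic(X)[p]$, I pick a Cartier divisor $L$ with $\O_X(L)\cong\L$ and a rational function $g\in K(X)^\times$ with $(g)=pL$, and set $\alpha(\L):=dg/g$. As sketched in the introduction, a local computation (writing $g$ locally as a unit times a $p$-th power) shows that $dg/g$ is regular on $X$; it is automatically closed, and satisfies $C(dg/g)=dg/g$ since logarithmic forms are Cartier-fixed. Changing $L$ by a principal divisor or $g$ by a nonzero scalar alters $dg/g$ only by $p\cdot dh/h=0$, so $\alpha$ is well-defined with image in $H^0(X,Z_X^1)^{C=\mathrm{id}}$. Injectivity is immediate: if $dg/g=0$ then $dg=0$, so $g=h^p$ for some $h\in K(X)^\times$; hence $pL=(g)=p(h)$ in the torsion-free group of Weil divisors, forcing $L=(h)$ and $\L$ trivial.

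For surjectivity onto the Cartier-fixed part, I take $\omega\in H^0(X,Z_X^1)$ with $C\omega=\omega$ and invoke the generic-point form of Cartier's theorem, namely that $d\log$ induces an isomorphism $K(X)^\times/K(X)^{\times p}\xrightarrow{\cong}(Z_{K(X)/k}^1)^{C=\mathrm{id}}$; this yields $g\in K(X)^\times$ with $\omega=dg/g$. The regularity of $\omega$ at each codimension-one point, combined with the expansion $dg/g=du/u+n\,d\pi/\pi$ when $g=u\pi^n$, forces $p\mid n$ everywhere, so $(g)=pL$ for some divisor $L$, and $\O_X(L)\in\Pic(X)[p]$ maps to $\omega$. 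Using the first hypothesis to identify $H^0(X,Z_X^1)=H^0(X,\Omega_X^1)$, I conclude that $\alpha$ is a bijection onto $H^0(X,\Omega_X^1)^{C=\mathrm{id}}$.

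Finally, I extend scalars. By the second hypothesis, $C$ acts on the finite-dimensional $k$-vector space $V:=H^0(X,\Omega_X^1)$ as a bijective inverse-Frobenius-semilinear endomorphism. Since $k$ is algebraically closed, the standard Frobenius descent for such endomorphisms (equivalently, Lang's theorem applied to $\mathrm{GL}_n$) yields $V^{C=\mathrm{id}}\otimes_{\mathbb{F}_p}k\xrightarrow{\cong}V$, and tensoring $\alpha$ produces the desired isomorphism $H^0(X,\Omega_X^1)\cong\Pic(X)[p]\otimes_{\Z}k$. I expect the main technical point to be the surjectivity of $\alpha$, which rests on the generic-point Cartier theorem and the divisor-multiplicity computation; the descent step is classical but uses the bijectivity of $C$ in an essential way.
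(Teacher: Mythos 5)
The paper does not actually prove this proposition; it simply defers to \cite[Proposition 4.3]{GK03}, so there is no in-text argument to compare against. Your proof is correct and is essentially the standard one (and, as far as I can tell, the one in the cited reference): build the $d\log$ map $\Pic(X)[p]\to H^0(X,Z_X^1)^{C=\mathrm{id}}$, prove bijectivity via the Jacobson--Cartier theorem at the generic point together with the codimension-one multiplicity computation, then untwist the $p^{-1}$-semilinear operator $C$ by Frobenius descent over the algebraically closed base. Three small points you should make explicit if you write this up. First, you need $\alpha$ to be a homomorphism of abelian groups (hence $\mathbb F_p$-linear), which follows from $d\log(g_1g_2)=d\log g_1+d\log g_2$ and is what justifies tensoring with $k$ at the end; you use this implicitly. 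Second, in the step ``forces $p\mid n$'' you need that $d\pi$ generates a direct summand of the free module $\Omega^1_{X,x}$ at a codimension-one point $x$ with uniformizer $\pi$; this holds because $\kappa(x)$ is separably generated over $k$, so $\mathfrak m_x/\mathfrak m_x^2\to\Omega^1_X\otimes\kappa(x)$ is injective by a dimension count, and without it ``$n\,d\pi/\pi$ regular'' would not immediately give $p\mid n$. Third, the generic-point Cartier theorem (surjectivity of $d\log$ onto $(Z^1_{K(X)/k})^{C=\mathrm{id}}$) is a genuine classical input that you are citing rather than proving; that is acceptable, but it carries essentially all the depth of the statement, so a precise reference is warranted. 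With those caveats, the argument is complete and supplies a proof the paper itself omits.
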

\begin{proof}
We refer the proof of~\cite[Proposition 4.3]{GK03}.
\end{proof}
Note that the assumptions of the proposition are satisfied when $X$ is $F$-split. 
Indeed, by the split exact sequence $$0\to \O_X\to F_*\O_X\xrightarrow{d} B_X^1\to0,$$ we have $H^j(X,B_X^1)=0$ for each $j\ge0$. 
Hence, by the exact sequence just before the proposition,  
we see that $H^0(X,Z_X^1)\xrightarrow{C} H^0(X,\Omega_X^1)$ is an isomorphism. 
In particular, $\dim H^0(X,Z_X^1)=\dim H^0(X,\Omega_X^1)$. 
The natural injection $H^0(X,Z_X^1)\to H^0(X,F_*\Omega_X^1)$ is then an isomorphism.
\section{Triviality of tangent bundles and $p$-torsion line bundles}\label{section:p-tors}
In this section, we investigate the relationship between 
$p$-torsion line bundles on a projective variety and the triviality of the tangent bundle. \par
Let $X$ be a normal projective variety and $J= \{L_1,\ldots,L_n\}$ be a set of $p$-torsion Cartier divisors on $X$. 
We define the finite morphism $\sigma_J:X_J\to X$ associated with $J$ as follows. 
Let $g_1,\ldots,g_n$ be rational functions of $X$ such that $pL_i=(g_i)$ for each $i$, 
where $(g_i)$ is the divisor of $g_i$.
We then take the normalization $X_J$ of $X$ in $K(X)(g_1^{p^{-1}},\ldots,g_n^{p^{-1}})$, 
and $\sigma_J:X_J\to X$ denotes the natural morphism. 
It is easily seen that $\deg \sigma_J\le p^n$. 
Furthermore, by the construction, we have a morphism $\tau_J:X\to X_J$ satisfying $F_X=\sigma_J\circ\tau_J:X\to X_J\to X$. 
This induces $$\Pic(X)[\sigma_J]\subseteq\Pic(X)[F_X]=\Pic(X)[p].$$
\begin{lem}\label{lem:basis}
With notation as above, 
assume that $\O_X(L_1),\ldots,\O_X(L_n)$ are linearly independent in the $\mathbb F_p$-vector space $\Pic(X)[p]$. 
Then they form a basis of the $\mathbb F_p$-vector space $\Pic(X)[\sigma_J]$.
\end{lem}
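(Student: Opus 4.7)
The plan is to establish $\#\Pic(X)[\sigma_J] = p^n$; combined with the hypothesized linear independence, this will force the $\O_X(L_1),\ldots,\O_X(L_n)$ to span a subgroup of order $p^n$ inside a group of order $p^n$, hence to constitute a basis. Throughout, I note that the factorization $F_X=\sigma_J\circ\tau_J$ yields $\Pic(X)[\sigma_J]\subseteq\Pic(X)[F_X]=\Pic(X)[p]$, so everything takes place inside the $\mathbb F_p$-vector space $\Pic(X)[p]$.

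First I will check the containment $\O_X(L_i)\in\Pic(X)[\sigma_J]$. Pulling back the relation $(g_i)=pL_i$ along $\sigma_J$ and using the defining identity $g_i=(g_i^{1/p})^p$ in $K(X_J)$ gives $p\,\sigma_J^*L_i=\sigma_J^*(g_i)=p(g_i^{1/p})$ as divisors on $X_J$. By torsion-freeness of the Cartier divisor group we conclude $\sigma_J^*L_i=(g_i^{1/p})$, a principal divisor, so $\sigma_J^*\O_X(L_i)\cong\O_{X_J}$. Combined with the hypothesized linear independence in $\Pic(X)[p]$, this shows that the $\O_X(L_i)$ generate a subgroup of order $p^n$ inside $\Pic(X)[\sigma_J]$, giving the lower bound $\#\Pic(X)[\sigma_J] \ge p^n$.

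Next I will compute $\deg\sigma_J$. Since $K(X_J)=K(X)(g_1^{1/p},\ldots,g_n^{1/p})$, the standard fact about purely inseparable extensions in characteristic $p$ gives $[K(X_J):K(X)]=p^m$ where $m$ is the $\mathbb F_p$-dimension of the image of $\{g_1,\ldots,g_n\}$ in $K(X)^*/(K(X)^*)^p$. If $\prod g_i^{a_i}=h^p$ for some $h\in K(X)^*$, taking divisors yields $p\sum a_iL_i=p(h)$, hence $\sum a_iL_i=(h)$ by torsion-freeness, and so $\O_X(\sum a_iL_i)\cong\O_X$; the linear independence hypothesis then forces $a_i\equiv 0\pmod p$ for every $i$. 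Therefore $\deg\sigma_J=p^n$.

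For the matching upper bound $\#\Pic(X)[\sigma_J]\le p^n$ I intend to apply Lemma~\ref{lem:decomp} to $\sigma_J:X_J\to X$, which requires that the unit $\O_X\to\sigma_{J*}\O_{X_J}$ split as $\O_X$-modules. The splitting comes from the explicit iterated cyclic cover description: the $\O_X$-subalgebra of $K(X_J)$ generated by $g_1^{1/p},\ldots,g_n^{1/p}$ decomposes, as an $\O_X$-module, as $\bigoplus_{\vec a\in\{0,\ldots,p-1\}^n}\O_X(-\sum a_iL_i)\cdot g_1^{a_1/p}\cdots g_n^{a_n/p}$, with the $\vec a=0$ summand identified with $\O_X$; a local analysis at the codimension-one points of $X$ (using that each $g_i^{1/p}$ cuts out the principal divisor $\sigma_J^*L_i$) shows this subalgebra is already integrally closed in $K(X_J)$, so it coincides with $\sigma_{J*}\O_{X_J}$ and the splitting follows. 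Lemma~\ref{lem:decomp} then gives $\#\Pic(X)[\sigma_J]\le p^n$, and combined with the lower bound the two estimates match, proving the lemma. The main obstacle is precisely this splitting step: establishing that the concrete cyclic-cover algebra is integrally closed is what carries the geometric content of the argument.
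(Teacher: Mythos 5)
There is a genuine gap at the step you yourself flag as the main obstacle: the splitting of $\O_X\to\sigma_{J*}\O_{X_J}$. You try to obtain it by identifying $\sigma_{J*}\O_{X_J}$ with the naive cyclic-cover algebra $\bigoplus_{\vec a}\O_X(-\sum a_iL_i)\,g_1^{a_1/p}\cdots g_n^{a_n/p}$ and asserting that a local analysis shows this algebra is integrally closed. For purely inseparable $p$-cyclic covers in characteristic $p$ this is false in general: locally the algebra has the form $\O_U[t]/(t^p-u)$ for a unit $u$ (write $g_i=u\,s^p$ with $s$ a local equation of $L_i$ and substitute $t=s t'$), and such a ring can fail to be regular in codimension one, hence fail to be normal --- e.g.\ $u=1+x^2y$ on $\mathbb{A}^2$ in odd characteristic gives $(t-1)^p=x^2y$, which is not a DVR at the height-one prime $(x,t-1)$. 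So the normalization $X_J$ may be strictly larger than the cyclic algebra, and no splitting is produced this way. The tell is that your argument never uses any hypothesis on $X$ beyond normality, whereas the splitting really does come from the ambient hypotheses: in the context where the lemma is applied, $X$ is $F$-split, and since $F_X=\sigma_J\circ\tau_J$ the split Frobenius map factors as $\O_X\to\sigma_{J*}\O_{X_J}\to F_{X*}\O_X$; a retraction of the composite restricts to a retraction of the first arrow. That one line is the paper's entire splitting argument, and it is the piece your proof is missing.

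Two smaller points. First, your exact computation $\deg\sigma_J=p^n$ rests on a ``standard fact'' that is not true: the degree of $K(g_1^{1/p},\ldots,g_n^{1/p})/K$ is governed by $p$-independence of the $g_i$ over $K^p$, not by their images in $K^*/(K^*)^p$ (take $g_1=x$, $g_2=y$, $g_3=x+y$: multiplicatively independent mod $p$-th powers, yet $(x+y)^{1/p}=x^{1/p}+y^{1/p}$ already lies in the subfield). Fortunately you only need the easy bound $\deg\sigma_J\le p^n$, which is noted in the setup, so this is repairable by deletion. Second, your lower bound $p^n\le\#\Pic(X)[\sigma_J]$ via $\sigma_J^*L_i=(g_i^{1/p})$ is correct and is exactly the paper's first step; once the splitting is obtained from $F$-splitting, the sandwich $p^n\le\#\Pic(X)[\sigma_J]\le\deg\sigma_J\le p^n$ closes the argument as you intended.
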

\begin{proof}
Set $d_J:=\deg\sigma_J$ and $\nu_J:=\#\Pic(X)[\sigma_J]$.
By the construction of $\sigma_J$, we have 
$${\sigma_J}^*L_i={\sigma_J}^*\frac{(g_i)}{p}=\frac{({\sigma_J}^*g_i)}{p}=\frac{((g_i^{p^{-1}})^p)}{p}=(g_i^{p^{-1}}),$$ 
and therefore ${\sigma_J}^*L_i\sim 0$ for each $L_i$. 
This implies that $$p^{\#J}=\#\{\O_X(\sum m_jL_j)|0\le m_j<p\}\le \nu_J.$$ 
Since $F_X=\sigma_J\circ\tau_J$ as mentioned above, we have the homomorphism 
$$ \O_{X}\to {\sigma_J}_*\O_{X_J}\to {F_X}_*\O_X. $$ 
The $F$-splitting of $X$ then induces a slitting of $\O_X\to{\sigma_J}_*\O_{X_J}$, 
so $\nu_J \le d_J$ as shown in Lemma~\ref{lem:decomp}.
Since $d_J\le p^{\#J}$, we deduce that $\nu_J=p^{\#J}$, 
which implies that $\Pic(X)[\sigma_J]$ is generated by 
$\O_X(L_1),\ldots, \O_X(L_n)$ as an $\mathbb F_p$-vector space.
\end{proof}
%
%
\begin{proof}[Proof of Theorem~\ref{thm:ch_triv_intro}.]
Set $n:=\dim X$. 
By Proposition~\ref{prop:equiv}, we see that $X$ is $F$-split and $\#\Pic(X)[p]=p^n$. 
Let $I=\{L_1,\ldots, L_n\}$ be a set of $p$-torsion divisors 
such that $\O_X(L_1),\ldots,\O_X(L_n)$ form a $\mathbb{F}_p$-basis of $\Pic(X)[p]$.  
Let $g_1,\ldots,g_n$ be rational functions of $X$ such that $pL_i=(g_i)$ for each $i$. 
%
Then, by a local calculation, we obtain that $dg_i/g_i\in H^0(X,\Omega_X^1)$.
We show that $dg_1/g_1,\ldots,dg_n/g_n$ form a basis of 
$K(X)$-vector space $(\Omega_X^1)_{\eta}\cong\Omega_{K(X)/k}^1$, 
where $\eta$ is the generic point of $X$. 
Suppose that there exists $J\subseteq I$ and $L_i\in I\setminus J$ such that $dg_i/g_i=\sum_{L_j\in J}a_jdg_j/g_j$ for some $a_j\in K(X)$.
Since there exists the $p$-th root of ${\sigma_J}^*g_j$ in $K(X_J)$ for each $L_j\in J$, we see that 
$$\frac{d({\sigma_J}^*g_i)}{{\sigma_J}^*g_i}={\sigma_J}^*\frac{dg_i}{g_i}={\sigma_J}^*\sum_{L_j\in J}\frac{a_jdg_j}{g_j}
=\sum_{L_j\in J}\frac{({\sigma_J}^*a_j)d({\sigma_J}^*g_j)}{{\sigma_J}^*g_j} =0.$$
Thus, $d({\sigma_J}^*g_i)=0$, or equivalently, there exists the $p$-th root of ${\sigma_J}^*g_i$ in $K(X_J)$. 
Hence, ${\sigma_J}^*L_i\sim0$, which contradicts Lemma~\ref{lem:basis}. 
Let $\varphi:\bigoplus_{1\le i\le n}\O_X\to\Omega_X^1$ be the morphism induced by $dg_1/g_1,\ldots,dg_n/g_n$. 
Taking the determinants, we obtain the morphism $\mathrm{det}(\varphi):\O_X\to \o_X$ which is an isomorphism over $\eta$, 
particularly it is injective. 
Since $\o_X$ is numerically trivial, we see that $\mathrm{det}(\varphi)$ is an isomorphism, 
which implies that $\varphi$ is also an isomorphism. 
This proves our assertion. 
\end{proof}
\begin{rem}
By Propositions \ref{prop:equiv} and \ref{prop:GK}, 
we see that the converse of Theorem \ref{thm:ch_triv_intro} holds when $X$ is $F$-split. 
\end{rem}
\section{Characterization of ordinary abelian varieties}
The aim of this section it to prove Theorem~\ref{thm:ch_ab_intro}. 
We first recall a result of Mehta and Srinivas. 
\begin{thm}[\textup{\cite[Theorem 1]{MS87}}]\label{thm:MS}
Let $X$ be a smooth projective $F$-split variety with trivial tangent bundle. 
Then there exists a Galois covering $\pi:A\to X$ of degree $p^m$ for an integer $m>0$ from an ordinary abelian variety $A$.
\end{thm}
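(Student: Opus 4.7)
The plan is to construct the cover through the Albanese morphism of $X$: the $F$-splitting forces the Albanese to be finite étale onto an ordinary abelian variety, so $X$ itself becomes an ordinary abelian variety, and the Verschiebung then provides a nontrivial $p$-power degree cover in the required direction.

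First, since $X$ is $F$-split, the map $\O_X\to F_*\O_X$ splits and Frobenius acts bijectively on $H^1(X,\O_X)=\mathrm{Lie}(\Pic^0(X))$. Hence $\Pic^0(X)$ is reduced with bijective Frobenius on its Lie algebra, so it is an ordinary abelian variety; its dual $A:=\mathrm{Alb}(X)$ is therefore also an ordinary abelian variety. The $F$-split Hodge-type equality $h^1(X,\O_X)=h^0(X,\Omega_X^1)$, combined with triviality of $\Omega_X^1$, gives $\dim A=h^1(X,\O_X)=h^0(X,\Omega_X^1)=n:=\dim X$.

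Next, consider the Albanese morphism $a:X\to A$. By the universal property of the Albanese, $a^*:H^0(A,\Omega_A^1)\to H^0(X,\Omega_X^1)$ is injective; since both sides have dimension $n$, it is an isomorphism. As $a^*\Omega_A^1$ and $\Omega_X^1$ are trivial bundles of rank $n$ generated by their global sections, the sheaf pullback $a^*\Omega_A^1\to\Omega_X^1$ is an isomorphism. Thus $a$ is unramified, and properness together with miracle flatness makes it finite étale; dimension forces surjectivity. A connected finite étale cover of an abelian variety inherits an abelian variety structure by lifting the group law via étale rigidity, and étale isogenies preserve ordinariness, so $X$ itself is an ordinary abelian variety. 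Then $\pi:=V_X:X^{(p)}\to X$, the Verschiebung of $X$, is an étale Galois cover of degree $p^n$ with Galois group $X[p]^{\mathrm{et}}(k)\cong(\Z/p)^n$ whose source is again an ordinary abelian variety, as required.

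The main obstacle is showing that $X$ itself admits an abelian variety structure: one must lift the group law $A\times A\to A$ along the étale cover $a\times a:X\times X\to A\times A$, a standard rigidity argument but the key leverage point. A secondary technicality is the Hodge-type equality $h^1(X,\O_X)=h^0(X,\Omega_X^1)$ in the $F$-split setting, which follows from the split exact sequence $0\to\O_X\to F_*\O_X\to B_X^1\to 0$ giving $H^j(X,B_X^1)=0$, combined with the Cartier sequence $0\to B_X^1\to Z_X^1\to\Omega_X^1\to 0$.
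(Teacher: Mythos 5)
There is a genuine gap, and it is fatal: your argument, if correct, would prove that every smooth projective $F$-split variety with trivial tangent bundle is itself an ordinary abelian variety. The Igusa surface of Section~\ref{section:surf} is a direct counterexample: it is $F$-split, has trivial tangent bundle, and is not an abelian variety (indeed, the existence of this example is the reason the main theorem of the paper needs the hypothesis $p^e>2$). The first false step is the claim that $F$-splitting forces $\Pic^0(X)$ to be reduced. $F$-splitting does give that Frobenius acts bijectively on $H^1(X,\O_X)$, but bijectivity of Frobenius on $H^1(X,\O_X)$ does not imply reducedness of the Picard scheme: the obstruction is governed by the Bockstein operations with values in $H^2(X,\O_X)$, not by the Frobenius action on $H^1$ alone. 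For the Igusa surface one has $h^1(X,\O_X)=h^0(X,\Omega_X^1)=2$ with Frobenius bijective on $H^1(X,\O_X)$, yet $\dim\Pic^0(X)=\dim\mathrm{Alb}(X)=1$, so $\Pic^0(X)$ is non-reduced. Consequently your identity $\dim A=h^1(X,\O_X)=n$ fails (the correct statement is $\dim\mathrm{Alb}(X)\le h^1(X,\O_X)$, with equality only when $\Pic^0$ is reduced), the map $a^*:H^0(A,\Omega_A^1)\to H^0(X,\Omega_X^1)$ is injective but not surjective, and the Albanese map is not \'etale of the right degree, so the entire chain collapses.

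You should also note that the paper does not prove this statement at all: it is quoted verbatim from Mehta and Srinivas \cite[Theorem 1]{MS87} (together with Remark~\ref{rem:ord} translating their ordinarity hypothesis into $F$-splitting). Their proof is substantially deeper than an Albanese argument; it proceeds by studying the \'etale fundamental group of $X$, showing that every connected finite \'etale cover again has trivial tangent bundle and is ordinary, building a tower of such covers, and eventually producing the abelian cover $A\to X$ of $p$-power degree \emph{above} $X$ rather than identifying $X$ itself with an abelian variety. Any correct proof must leave room for the quotient $X=A/G$ to be non-abelian, which your approach structurally cannot do.
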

\begin{rem}\label{rem:ord}
In the original statement of~\cite[Theorem 1]{MS87}, $X$ is assumed to be ordinary. As shown in~\cite[Lemma (1.1)]{MS87}, the ordinarity of $X$ is equivalent to the $F$-splitting of $X$, under the assumption that the tangent bundle of $X$ is trivial.
\end{rem}
%
The next lemma shows that $\pi:A\to X$ in Theorem~\ref{thm:MS} induces 
an isomorphism between the group of $p$-torsion line bundles on $A$ and that of $X$. 
\begin{lem}\label{lem:et_cov}
Let $e>0$ be an integer.  
Let $\pi:X\to Y$ be an \'etale morphism between smooth projective varieties. 
Assume that $Y$ satisfies the following conditions: 
\begin{itemize}
\item ${F_Y^e}_*\O_Y$ is isomorphic to a direct sum of line bundles. 
\item $K_Y$ is pseudo-effective. 
\end{itemize}
Then the induced homomorphism $\pi^*:\Pic(Y)[p^e]\to\Pic(X)[p^e]$ is an isomorphism.
\end{lem}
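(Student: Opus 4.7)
The plan is to compare two Krull--Schmidt decompositions of $F^e_{X*}\O_X$ so as to force $\pi^*$ to be bijective on the $p^e$-torsion parts of the Picard groups. First I verify that $X$ itself satisfies the hypotheses of Proposition~\ref{prop:equiv}. Since $\pi$ is \'etale between smooth varieties, $K_X=\pi^*K_Y$ is pseudo-effective, because pullback preserves pseudo-effectiveness along proper surjective morphisms. Moreover, \'etale morphisms in positive characteristic are exactly those whose relative Frobenius is an isomorphism, which forces the commutative square with $\pi$ on both vertical arrows and $F_X^e$, $F_Y^e$ on the horizontals to be Cartesian. Flat base change applied to the finite flat morphism $F_Y^e$ then yields a canonical isomorphism $\pi^*F^e_{Y*}\O_Y\cong F^e_{X*}\O_X$, so the latter is again a direct sum of line bundles.

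With the hypotheses of Proposition~\ref{prop:equiv} verified on both sides, the equality case of Lemma~\ref{lem:decomp} gives
\[
F^e_{Y*}\O_Y\cong\bigoplus_{\L\in\Pic(Y)[p^e]}\L \qquad\text{and}\qquad F^e_{X*}\O_X\cong\bigoplus_{\M\in\Pic(X)[p^e]}\M.
\]
Pulling back the first decomposition by $\pi$ and combining with the base-change isomorphism yields
\[
\bigoplus_{\L\in\Pic(Y)[p^e]}\pi^*\L \;\cong\; F^e_{X*}\O_X \;\cong\; \bigoplus_{\M\in\Pic(X)[p^e]}\M.
\]
Since line bundles are indecomposable, the Krull--Schmidt theorem forces the two multisets of summands to coincide. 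As each $\M\in\Pic(X)[p^e]$ occurs on the right-hand side with multiplicity exactly one, every such $\M$ arises as $\pi^*\L$ for exactly one $\L\in\Pic(Y)[p^e]$, giving both surjectivity and injectivity of $\pi^*$ simultaneously.

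The delicate step I expect is the base-change isomorphism $\pi^*F^e_{Y*}\O_Y\cong F^e_{X*}\O_X$: it hinges on the Frobenius square being Cartesian for \'etale $\pi$, which is the classical invariance of Frobenius under \'etale morphisms. Once this step is in place, the argument reduces to a mechanical application of Proposition~\ref{prop:equiv}, Lemma~\ref{lem:decomp}, and Krull--Schmidt uniqueness.
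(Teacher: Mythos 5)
Your proof is correct and follows essentially the same route as the paper: the \'etale invariance of Frobenius gives $\pi^*{F_Y^e}_*\O_Y\cong{F_X^e}_*\O_X$, both pushforwards are identified with the direct sum of all $p^e$-torsion line bundles (each with multiplicity one), and surjectivity is read off by comparing the two decompositions. The only divergence is the injectivity step, which the paper obtains by citing \cite[Lemma 3.3 (2)]{ST16} while you extract it from the multiplicity-one feature of the Krull--Schmidt decomposition; both are valid, and your version has the minor advantage of being self-contained.
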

\begin{proof}
Since $\pi$ is \'etale, we have $\pi^*{F_Y^e}_*\O_Y\cong {F_X^e}_*{\pi^*}\O_Y\cong {F_X^e}_*\O_X$, 
and hence $X$ satisfies the same conditions as $Y$. 
Note that $K_X\sim\pi^*K_Y$, so $K_X$ is also pseudo-effective. 
By~\cite[Lemma 3.3 (1)]{ST16}, we see that ${F_X^e}_*\O_X$ and ${F_Y^e}_*\O_Y$ are respectively 
the direct sum of all $p^e$-torsion line bundles on $X$ and $Y$.
Then, we get the surjectivity of $\pi^*:\Pic(Y)[p^e]\to\Pic(X)[p^e]$.
The injectivity follows from~\cite[Lemma 3.3 (2)]{ST16}. 
\end{proof}
The following is the key proposition in the proof of Theorem~\ref{thm:ch_ab_intro}.
\begin{prop}\label{prop:auto}
Let $A$ be an ordinary abelian variety and $\sigma:A\to A$ be an isomorphism such that $\sigma^{p^d}$ is a translation for a positive integer $d$, but $\sigma^{p^{(d-1)}}$ is not a translation. Let $e$ be a positive integer. 
Assume that $\sigma^*\mathcal K\cong\mathcal K$ for each $p^e$-torsion line bundle $\mathcal K$ on $A$. Then $p^e=p^d=2$.
\end{prop}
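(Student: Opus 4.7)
The plan is to first reduce to the case where $\sigma$ is a group automorphism $\alpha$: writing $\sigma=\tau_a\circ\alpha$ with $a=\sigma(0)$ forces $\alpha(0)=0$, so $\alpha$ is a group homomorphism, and a routine computation of $\sigma^k$ shows that $\alpha$ has exact order $p^d$ in $\mathrm{Aut}_{\mathrm{grp}}(A)$. Since translations act trivially on $\Pic^0(A)$, the hypothesis on $\sigma^*$ becomes $\hat\alpha|_{\hat A[p^e](k)}=\mathrm{id}$, where $\hat\alpha$ is the dual morphism acting on $\hat A[p^e](k)=\Pic^0(A)[p^e]$. To bring $A$ itself into the picture, I would produce a polarization preserved by $\alpha$: averaging any ample $L_0$ over $\langle\alpha\rangle$ yields $L:=\bigotimes_{i=0}^{p^d-1}(\alpha^i)^*L_0$, which is ample and satisfies $\alpha^*L\cong L$, so its polarization $\lambda_L:A\to\hat A$ satisfies $\hat\alpha\circ\lambda_L\circ\alpha=\lambda_L$.

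On the étale Tate modules $T_pA\cong T_p\hat A\cong \mathbb{Z}_p^g$ (rank $g$ by ordinarity), write $N_1:=\alpha|_{T_pA}$ and $N_2:=\hat\alpha|_{T_p\hat A}$. The polarization identity translates into the Rosati relation $N_2=\Lambda N_1^{-1}\Lambda^{-1}$ in $GL_g(\mathbb{Q}_p)$, where $\Lambda$ is induced by $\lambda_L$, so $N_1$ and $N_2^{-1}$ share their eigenvalues and Jordan form. The hypothesis gives $N_2\equiv I\pmod{p^e}$, and $\alpha^{p^d}=\mathrm{id}$ gives $N_2^{p^d}=I$, so $N_2$ is diagonalizable over $\overline{\mathbb{Q}_p}$ with eigenvalues $p^d$-th roots of unity $\zeta$ satisfying $v_p(\zeta-1)\geq e$. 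Using the classical formula $v_p(\zeta_{p^m}-1)=1/(p^{m-1}(p-1))$ for primitive $p^m$-th roots, the inequality $1/(p^{m-1}(p-1))\geq e\geq 1$ leaves only $\zeta=1$, or else $p=2$, $e=1$, and $\zeta=-1$.

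Hence either $N_2=I$, forcing $N_1=I$ by Rosati, or we are in the case $p=2$, $e=1$ with $N_1^2=N_2^2=I$. In the first case the characteristic polynomial of $\alpha$ on cohomology equals $(t-1)^{2g}$, so $\alpha|_{T_\ell A}$ is unipotent and of finite order for any $\ell\neq p$, forcing $\alpha=\mathrm{id}$—a contradiction to $d\geq 1$. In the second case the same characteristic-polynomial argument applied to $\alpha^2$ yields $\alpha^2=\mathrm{id}$, so the exact order $p^d$ of $\alpha$ divides $2$, giving $d=1$ and $p^d=p^e=2$. The main obstacle is that the hypothesis only constrains the $\hat A$-side, while controlling the order of $\alpha$ requires both sides; the polarization preserved by the finite-order $\alpha$ is the crucial bridge that lets the Rosati identity transfer the constraint from $N_2$ to $N_1$, after which $p$-adic eigenvalue arithmetic together with the faithfulness of the $\ell$-adic representation closes the argument.
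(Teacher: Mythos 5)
Your proposal is correct in substance, but it takes a genuinely different route from the paper. The paper stays entirely inside the Picard group: after the same reduction to a group automorphism $\sigma$ of exact order $p^d$, it introduces the abelian subvariety $B=\mathrm{im}(1_A-\sigma^{p^{d-1}})$, on which $\sum_{i=0}^{p-1}(\sigma^{p^{d-1}})^i$ restricts to $0_B$; this forces every $\sigma$-invariant line bundle on $B$ to be $p$-torsion, and a line bundle of order $p^2$ on $A$ whose restriction to $B$ still has order $p^2$ (produced from the surjection ${F_A^2}_*\O_A\twoheadrightarrow{F_B^2}_*\O_B$ and its decomposition into torsion line bundles) then yields $e=1$; the steps $d=1$ and $p=2$ follow from the cocycle $\mathcal N=\sigma^*\L\otimes\L^{-1}$ and the exponent $p(p-1)/2$. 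You instead linearize the problem on the \'etale $p$-adic Tate modules, use an averaged $\sigma$-invariant polarization to conjugate $T_p(\hat\sigma)$ to $T_p(\sigma)^{-1}$, and finish with the cyclotomic valuation $v_p(\zeta_{p^m}-1)=1/(p^{m-1}(p-1))$. Your argument is more conceptual and makes it transparent why $p=2$, $e=1$ is the only surviving case, at the cost of heavier standard machinery (dual abelian varieties, Tate modules of ordinary abelian varieties, the identity $\lambda_{\sigma^*L}=\hat\sigma\circ\lambda_L\circ\sigma$); the paper's argument is longer but essentially self-contained at the level of line bundles and isogenies.

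One step in your write-up deserves more care: the passage from $N_1=N_2=I$ to ``the characteristic polynomial of $\sigma$ on cohomology equals $(t-1)^{2g}$'' uses the identity $P_\sigma(t)=\det(t-N_1)\det(t-N_2)$, which holds for ordinary $A$ because $A[p^\infty]$ splits into an \'etale part and a multiplicative part Cartier-dual to the \'etale part of $\hat A[p^\infty]$; this is true but needs justification (e.g.\ via Dieudonn\'e modules) rather than assertion. It can also be bypassed entirely: $N_1=I$ already says that $\sigma$ fixes every $p$-power torsion point of $A(k)$, and for an ordinary abelian variety these points are Zariski dense, so $\sigma=\mathrm{id}_A$ directly (and likewise $N_1^2=I$ gives $\sigma^2=\mathrm{id}_A$ in the second case). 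With that repair the proof is complete.
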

We note that if $p=2$ and $\sigma=(-1)_A$, then $\sigma^p=1_A$ and $\sigma^*\mathcal K\cong\mathcal K^{-1}\cong\mathcal K$ for each $p$-torsion line bundle $\mathcal K$.
\begin{proof}
We denote by $t_a$ the translation on $A$ by $a$. 
Then, ${t_{a}}^*\L\cong\L$ for every $\L\in\Pic^0(X)$. 
Set $a:=\sigma(0)$ and $\rho:=t_{-a}\circ\sigma$. 
Then, $\rho$ is a nontrivial automorphism of abelian varieties, and for some $a'\in A$,
$$t_{a'}=\sigma^{p^d}=(t_{a}\circ\rho)^{p^d}
=t_{a}\circ t_{\rho(a)}\circ\cdots\circ t_{{\rho}^{p^d-1}(a)}\circ {\rho}^{p^d}
=t_{a+\rho(a)+\cdots+{\rho}^{p^d-1}(a)}\circ{\rho}^{p^d}.$$ 
Hence, $a'=a+\rho(a)+\cdots+{\rho}^{p^d-1}(a)$ and ${\rho}^{p^d}=\mathrm{id}_A$. 
Replacing $\sigma$ by $\rho$, we may assume that $\sigma$ is an automorphism of order $p^d$ of abelian varieties. \par
We show $e=1$. Let $B$ be an abelian subvariety of $A$ defined as the image of $1_A-\sigma^{p^{d-1}}$. 
Then, $\sigma$ induces an automorphism $\tau:=\sigma|_B$ on $B$.
By the choice of $B$, we see that $f(\sigma)|_B=f(\tau)$ for every $f(T)\in\Z[T]$.
Since $(\sum_{i=0}^{p-1}(\sigma^{p^{d-1}})^i)(1_A-\sigma^{p^{d-1}})=(1_A-(\sigma^{p^{d-1}})^p)=0_A,$ we have 
$\sum_{i=0}^{p-1}(\tau^{p^{d-1}})^i=(\sum_{i=0}^{p-1}(\sigma^{p^{d-1}})^i)|_B=0_B.$
Therefore, for each line bundle $\M$ on $B$ satisfying $\tau^*\M\cong \M$, we obtain
$$\M^p\cong \bigotimes_{i=0}^{p-1}((\tau^{p^{d-1}})^i)^*\M\cong (\sum_{i=0}^{p-1}(\tau^{p^{d-1}})^i)^*\M\cong (0_B)^*\M\cong \O_B.$$
We can take a torsion line bundle $\L$ of order $p^2$ on $A$ such that $\L|_B$ is also of order $p^2$.
Indeed, by the natural surjection ${F_A^2}_*\O_A\twoheadrightarrow{F_B^2}_*\O_B$, we obtain 
$$\bigoplus_{\L\in\Pic(A)[p^2]}\L|_B\cong({F_A^2}_*\O_A)|_B\twoheadrightarrow{F_B^2}_*\O_B\cong\bigoplus_{\M\in\Pic(B)[p^2]}\M.$$
This implies that the natural homomorphism $\Pic(A)[p^2]\to\Pic(B)[p^2]$ is surjective, because any nonzero homomorphism between numerically trivial line bundles on a projective variety is an isomorphism.
Note that since $B$ is a nontrivial ordinary abelian variety, there exists a torsion line bundle of order $p^2$ on $B$. 
Since $\L|_B^p\ncong\O_B$, we have $\tau^*(\L|_B)\ncong \L|_B$, and so $\sigma^*\L\ncong\L$. 
Hence, by the assumption, we see that $\L$ is not $p^e$-torsion, and thus $e=1$. \par
Next, we show $d=1$.
Let $\L$ be a $p^2$-torsion line bundle on $A$. Since $\L^p$ is $p$-torsion, we have $\sigma^*\L^p\cong\L^p$ by the assumption.
Set $\mathcal N:=\sigma^*\L\otimes\L^{-1}.$ 
Then, $\mathcal N^p\cong \sigma^*\L^p\otimes\L^{-p}\cong \L^p\otimes\L^{-p}\cong\O_A,$
so $\sigma^*\mathcal N\cong\mathcal N$.
From this, we obtain that for each $i\ge0$,
$$(\sigma^i)^*\L\cong (\sigma^{i-1})^*(\L\otimes\mathcal N)\cong((\sigma^{i-1})^*\L)\otimes\mathcal N\cong \cdots\cong\L\otimes\mathcal N^i.$$
Therefore, we have $(\sigma^p)^*\L\cong\L$ for every $p^2$-torsion line bundle $\L$. 
Thus, by the above argument, we see that $\sigma^p$ does not satisfy the assumption on $\sigma$ in the proposition. 
This means that $\sigma^p=0_A$, and so $d=1$.
\par
We finally show that $p=2$.
As shown before, we can take a torsion line bundle $\L$ on $A$ of order $p^2$ such that $\M:=\L|_B$ is also of order $p^2$. Set $\mathcal N:=\sigma^*\L\otimes\L^{-1}$. We then see that $\mathcal N^p\cong\O_A$ and $(\sigma^i)^*\L\cong\L\otimes\mathcal N^i$ for each $i\ge0$ as above, so
$$(\sum_{i=0}^{p-1}\sigma^i)^*\L\cong\bigotimes_{i=0}^{p-1}(\sigma^i)^*\L\cong\bigotimes_{i=0}^{p-1}(\L\otimes \mathcal N^i)=\L^p\otimes\mathcal N^{p(p-1)/2}.$$
Restricting the isomorphism to $B$, we obtain
$(\sum_{i=0}^{p-1}\tau^i)^*\M\cong \M^p\otimes(\mathcal N^{p(p-1)/2})|_B.$
Since $\sum_{i=0}^{p-1}\tau^i=0_B$, we see that the left hand side is trivial, so $(\mathcal N^{p(p-1)/2})|_B\cong \M^{-p}\ncong\O_B$.
This implies that $p(p-1)/2$ is not divisible by $p$, so $p=2$, which concludes the proof.
\end{proof}
%
%
\begin{proof}[Proof of Theorem~\ref{thm:ch_ab_intro}.] 
For an ordinary abelian variety $A$, we have $\#\Pic(A)[p^e]=p^{e\dim X}$ for each $e>0$, 
and hence the ``only if'' part follows from Proposition~\ref{prop:equiv}.
We show the ``if'' part. 
Let $e>0$ be an integer such that $F_*^e\O_X$ is a direct sum of line bundles. 
Note that ${F}_*\O_X$ is also a direct sum of line bundles, as mentioned in Remark~\ref{rem:e to e-1}.
By Theorem~\ref{thm:ch_triv_intro}, we see that $\Omega_X^1$ is trivial.
Then, by Theorem~\ref{thm:MS}, we obtain an ordinary abelian variety $A$ and 
a Galois \'etale cover $\pi:A\to X$ of degree $p^m$ for an integer $m>0$.
Let $G$ be the group of automorphisms $\sigma:A\to A$ of $A$ with $\pi\circ\sigma=\pi$. 
Note that $\#G=p^m$. 
Take a $\sigma\in G$. 
Let $d\ge0$ be the minimum integer such that $\sigma^{p^d}$ is a translation. 
By Lemma~\ref{lem:et_cov}, we have $\sigma^*\L\cong\L$ for each $\L\in\Pic(A)[p^e]$. 
If $d>0$, then we obtain $p^e=p^d=2$ by Proposition~\ref{prop:auto}, which contradicts the assumption.
Hence, every $\sigma\in G$ is a translation. 
Since $G$ acts freely on $A$, $G$ can be viewed as a subgroup of $A$, 
so the quotient $A/G\cong X$ is also an abelian variety (see, for example,~\cite[(4.40)]{GM}). 
\end{proof}
\begin{rem}
We assume that $F_*\O_X$ is a direct sum of line bundles, $K_X$ is pseudo-effective, and $p=2$.
By the same argument as above, we can show that $\sigma^p$ is a translation for each $\sigma\in G$. 
Hence, the set of translations $N$ in $G$ forms a normal subgroup of $G$ such that each element of $G/N$ is $p$-torsion. 
From this, we can give another proof of~\cite[Theorem 5.2]{Jos16}.
\end{rem}
\begin{rem}
Combining the above theorem with Theorem~\ref{thm:ch_triv_intro}, 
we can generalize a result of Li~\cite[Theorem 0.3]{Li10}, 
which states that an $F$-split projective variety of odd characteristic with trivial tangent bundle is an ordinary abelian variety
(see also~\cite[Theorem 5.2]{Jos16}). 
\end{rem}
%
%
%
\section{Case of curves and surfaces}\label{section:surf}
In this section, we classify all non-abelian smooth projective varieties $X$ of dimension at most 2 
with the property that $K_X$ is pseudo-effective and $F_*\O_X$ splits into a direct sum of line bundles. \par
In the case of curves, we obtain:
\begin{thm}
Let $X$ be a smooth projective curve. 
Then $X$ satisfies the following conditions if and only if $X$ is an ordinary elliptic curve:
\begin{itemize}
\item $F_*\O_X$ is isomorphic to a direct sum of line bundles.
\item $K_X$ is pseudo-effective.
\end{itemize}
\end{thm}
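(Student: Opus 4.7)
The plan is to deduce the theorem directly from Theorem~\ref{thm:ch_triv_intro} and Proposition~\ref{prop:equiv}, exploiting the fact that on a curve $\Omega_X^1$ coincides with $\omega_X$, so triviality of $\Omega_X^1$ immediately pins down the genus.

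For the ''if'' direction, let $E$ be an ordinary elliptic curve. Then $K_E\sim0$ is pseudo-effective, $E$ is $F$-split, and $\#\Pic(E)[p]=p=p^{\dim E}$, so by Proposition~\ref{prop:equiv} the sheaf $F_*\O_E$ is isomorphic to a direct sum of line bundles.

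For the ''only if'' direction, I would first invoke Theorem~\ref{thm:ch_triv_intro} to obtain that $\Omega_X^1$ is trivial. Since $X$ is a smooth projective curve, $\Omega_X^1\cong\omega_X$, hence $\omega_X\cong\O_X$; in particular the genus of $X$ equals one, and $X$ is an elliptic curve. Next, Proposition~\ref{prop:equiv} gives that $X$ is $F$-split and $\#\Pic(X)[p]=p$, which says that the $p$-rank of the elliptic curve $X$ equals $1$, i.e., $X$ is ordinary. Hence $X$ is an ordinary elliptic curve.

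There is no serious obstacle here; once Theorem~\ref{thm:ch_triv_intro} and Proposition~\ref{prop:equiv} are available, the curve case is essentially forced, and in particular one does not need to separate the characteristics $p>2$ and $p=2$ as in Theorem~\ref{thm:ch_ab_intro}. The only thing worth checking carefully is the passage from $\#\Pic(X)[p]=p$ on an elliptic curve to ordinariness, but this is just the definition of the $p$-rank of an elliptic curve.
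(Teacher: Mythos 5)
Your proposal is correct and follows essentially the same route as the paper: the paper's proof also invokes Theorem~\ref{thm:ch_triv_intro} to get that $X$ is $F$-split with trivial $\Omega_X^1$ and then observes this is equivalent to $X$ being an ordinary elliptic curve. Your version merely spells out the genus computation and the $p$-rank argument (via $\#\Pic(X)[p]=p$ from Proposition~\ref{prop:equiv}) that the paper leaves implicit.
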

\begin{proof}
By Theorem~\ref{thm:ch_triv_intro}, 
we see that $X$ is $F$-split and $\Omega_X^1$ is trivial. 
This is equivalent to saying that $X$ is an ordinary elliptic curve.
\end{proof}
Next, we consider the case of surfaces.  
By Theorems~\ref{thm:ch_ab_intro} and~\ref{thm:ch_triv_intro}, 
we see that such surfaces exist only in characteristic 2 and have trivial tangent bundles.
Before stating the theorem, we recall the construction of an Igusa surface, 
the first example of variety with non-reduced Picard scheme ~\cite[2. The Example]{Igu55}.
Let $E_0$ and $E_1$ be two elliptic curves and $a\in E_0$ be a torsion point of order 2. 
Then, $\Z/2\Z$ acts on $E_0\times E_1$ by $(x,y)\mapsto (x+a,-y)$.
The quotient of this action is called an Igusa surface.
\begin{thm}\label{igusa} \samepage
Let $X$ be a non-abelian smooth projective surface. 
Then $X$ satisfies the following conditions if and only if 
$X$ is an $F$-split Igusa surface:
\begin{itemize}
\item $F_*\O_X$ is isomorphic to a direct sum of line bundles.
\item $K_X$ is pseudo-effective.
\end{itemize}
\end{thm}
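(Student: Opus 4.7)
The plan is to combine Theorems~\ref{thm:ch_ab_intro}, \ref{thm:ch_triv_intro} and~\ref{thm:MS} to reduce to a Galois \'etale cover by an ordinary abelian surface, and then perform a group-theoretic analysis of the Galois action to recognise $X$ as an Igusa surface. Because $X$ is non-abelian, Theorem~\ref{thm:ch_ab_intro} rules out $p$ odd, so $p=2$ and only $F_*\mathcal{O}_X$ is assumed to split into line bundles. Theorem~\ref{thm:ch_triv_intro} then yields $\Omega_X^1\cong\mathcal{O}_X^{\oplus 2}$, Proposition~\ref{prop:equiv} shows $X$ is $F$-split, and Theorem~\ref{thm:MS} supplies a Galois \'etale cover $\pi\colon A\to X$ of degree $2^m$ from an ordinary abelian surface $A$ with Galois group $G$ acting freely. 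Non-abelianness of $X$ forces $G$ to contain a non-translation, and rerunning the argument in the proof of Theorem~\ref{thm:ch_ab_intro} through Proposition~\ref{prop:auto} with $e=1$ shows that $\sigma^{2}$ is a translation for every $\sigma\in G$.

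Next I would fix a non-translation $\sigma=t_{a}\circ\rho\in G$ with $\rho\in\mathrm{Aut}(A,0)$ an involution different from $\mathrm{id}_A$. The key step is to rule out $\rho=-\mathrm{id}_{A}$: freeness of $\sigma$ forces the equation $(\mathrm{id}-\rho)(x)=-a$ to have no solution, but if $\rho=-\mathrm{id}_{A}$ this becomes $[2](x)=-a$, which is always solvable by surjectivity of $[2]\colon A\to A$. Hence the identity components $A_{+}^{0}=\ker(\rho-\mathrm{id})^{0}$ and $A_{-}^{0}=\ker(\rho+\mathrm{id})^{0}$ are both one-dimensional, i.e.\ elliptic curves $E_{0}$ and $E_{1}$, and the addition map $\mu\colon E_{0}\times E_{1}\to A$ is an isogeny intertwining $\rho$ with $(\mathrm{id},-\mathrm{id})$ on the product.

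I would then identify $X$ with an Igusa quotient. Let $G'\trianglelefteq G$ denote the translation subgroup; then $G/G'$ embeds into $\mathrm{Aut}(A,0)$ as an elementary abelian $2$-group. A case analysis on admissible involutions---using that in the generic situation $\mathrm{Aut}(A,0)$ contains only $\pm\mathrm{id}$ together with the two Igusa-type involutions, and that the product of the two Igusa involutions equals the already forbidden $-\mathrm{id}_A$---should force $G/G'\cong\mathbb{Z}/2$. Setting $B=A/G'$ gives $X\cong B/\langle\bar\sigma\rangle$, and transport of structure through $\mu$ realises $B$ as a product $E_{0}\times E_{1}$ in which $\bar\sigma(x,y)=(x+a_{0},-y+a_{1})$; a coordinate shift in $y$, made possible by the surjectivity of $[2]$ on $E_{1}$, eliminates $a_{1}$, and the condition $\bar\sigma^{2}\in G'$ finally forces $2a_{0}=0$, yielding the Igusa presentation.

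For the converse, an $F$-split Igusa surface has $K_{X}\sim 0$ by the \'etaleness of its defining cover, so $K_X$ is pseudo-effective; to obtain the splitting of $F_{*}\mathcal{O}_{X}$ it is enough, by Proposition~\ref{prop:equiv}, to show $\#\Pic(X)[2]=4$, which I would extract from the Hochschild--Serre sequence of the cover $E_{0}\times E_{1}\to X$ using the ordinariness of the elliptic factors. The hardest part will be the third paragraph: realising $B$ as an \emph{honest} product $E_{0}\times E_{1}$ rather than merely an isogeny class, and dispatching the non-generic case in which $A$ has extra endomorphisms so that $\mathrm{Aut}(A,0)$ admits additional involutions, each of which must be excluded by the free-action criterion above.
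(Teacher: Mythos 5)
Your overall strategy (reduce to a Galois \'etale cover $\pi\colon A\to X$ by an ordinary abelian surface and analyse the Galois action directly) diverges from the paper, which instead invokes the Bombieri--Mumford classification of minimal surfaces of Kodaira dimension zero to see that $X$ is a hyperelliptic surface of type a1) or a3) in characteristic $2$, and then rules out a3) by an explicit line-bundle computation. The divergence exposes a genuine gap at exactly the step you flag as ``the hardest part'': realising $B$ as an honest product $E_0\times E_1$. This is not merely technically difficult --- it is false without re-injecting the hypothesis that $F_*\O_X$ splits into line bundles. Concretely, take $A=(E_0\times E_1)/\mu_2$ (quotient by the diagonal $\mu_2\subseteq E_0[2]\times E_1[2]$, which exists because the ordinary $E_i$ in characteristic $2$ have $E_i[2]\cong\Z/2\Z\times\mu_2$) with $\Z/2\Z$ acting by the descent of $(x,y)\mapsto(x+a,-y)$. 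This is the paper's case a3): $A$ is an ordinary abelian surface, the Galois group is $\Z/2\Z$ generated by $t_a\circ\rho$ with $\rho$ a non-central involution whose $\pm$-eigencurves are one-dimensional, $\sigma^2$ is a translation, and $G/G'\cong\Z/2\Z$ --- so $X=A/G$ passes every group-theoretic test in your outline. Yet $X$ is \emph{not} an Igusa surface, and $F_*\O_X$ is \emph{not} a direct sum of line bundles. The addition map $E_0\times E_1\to A$ out of the eigencurves is here a purely inseparable degree-$2$ isogeny, not an isomorphism, and no ``transport of structure'' or coordinate shift can repair that. Your argument as written would therefore prove a false classification. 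The missing idea is the paper's use of Lemma~\ref{lem:et_cov}: every $2$-torsion line bundle on $A$ must be $G$-invariant, and on $(E_0\times E_1)/\mu_2$ the bundle $\M={\rho_0}^*\M_0\otimes{\rho_1}^*\M_1$ (a square root of the pulled-back order-$2$ bundles) is $2$-torsion but satisfies $\sigma^*\M\cong\M\otimes{\rho_1}^*\L_1\ncong\M$; this is what actually kills a3), and nothing in your outline plays this role.

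Two further points. First, your case analysis of involutions in $\mathrm{Aut}(A,0)$ is only sketched ``in the generic situation''; abelian surfaces can have many extra involutions (CM products, quaternionic multiplication), and each would need the fixed-point criterion checked, which is precisely the labor the Bombieri--Mumford classification spares the paper. Second, in the converse direction your plan to compute $\#\Pic(X)[2]=4$ via Hochschild--Serre is delicate here, since the Igusa surface is the standard example of a non-reduced Picard scheme; the paper instead verifies $F$-splitting by pulling the splitting map back to $E_0\times E_1$ and shows $\Omega_X^1$ is trivial, then concludes via Propositions~\ref{prop:equiv} and~\ref{prop:GK}. That route is the one I would recommend you adopt.
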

\begin{proof}
As mentioned above, $T_X$ is trivial, so $K_X\sim0$.
By Theorem~\ref{thm:MS}, there exists a Galois \'etale cover $\pi:A\to X$ 
of degree $p^m$ for an $m>0$ from an ordinary abelian variety $A$.
Then, $\chi(\O_X)=p^{-m}\chi(\O_A)=0$.
By Bombieri and Mumford's classification of minimal surfaces of Kodaira dimension zero~\cite[p.25]{BM2}, 
we see that $X$ is either a hyperelliptic surface or a quasi-hyperelliptic surface. 
Let $f:X\to C$ be the Albanese map of $X$. 
We then have nonzero morphism $T_X\to f^*T_C$, and it is obviously surjective.
Therefore, $f$ is smooth and $X$ is a hyperelliptic surface.
Hence, we have an elliptic curves $E_0$, $E_1$ and 
a finite group scheme $G$ acting freely on $E_0\times E_1$ 
such that the quotient is isomorphic to $X$.
By the list of order of $K_X$ in~\cite[p.37]{BM2}, we see that $X$ is of type a).
Note that hyperelliptic surfaces of type c) also have trivial canonical bundles, but they are not $F$-split, 
because fibers of the Albanese maps are supersinguler elliptic curves. 
Since the base field is of characteristic 2, $X$ is of type a1) or a3). \par
We observe the case of a3). 
In this case, $G=(\Z/2\Z)\times\mu_2$. 
Here, $\Z/2\Z$ acts as $(x,y)\mapsto (x+a,-y)$ and $\mu_2$ acts by translation on both factors. 
Since $\mu_2$ is a normal subgroup scheme of $G$, 
we can take the quotient $\sigma:E_0\times E_1\to (E_0\times E_1)/\mu_2=:A$.
Then $\Z/2\Z$ acts freely on $A$ and $A/(\Z/2\Z)\cong X$. 
Recall that the quotient map of $E_i$ for the action of $\mu_2$ is the relative Frobenius map 
$F_{E_i/k}:=(F_{E_i},\nu_i):E_i\to E_i\times_k{\Spec k^{1/p}}$, 
where $\nu_i:E_i\to \mathrm{Spec}\;{k}$ is the structure morphism of $E_i$. 
Since the base field $k$ is algebraically closed, 
we may identify $F_{E_i/k}$ as the absolute Frobenius map $F_{E_i}$.
Hence, we obtain the following diagram:
$$\xymatrix@R=25pt@C=25pt{E_0 \ar[d]_{F_{E_0}} & E_0\times E_1 \ar[d]_{\alpha} \ar[r]^{\pi_1} \ar[l]_{\pi_0} & E_1 \ar[d]^{F_{E_1}} 
\\ E_0 & A \ar[r]^{\rho_1} \ar[l]_{\rho_0} & E_1. } $$
Considering the degrees of morphisms, we see that each square is cartesian. 
Let $\L_i$ be the torsion line bundle of order 2 on $E_i$. 
Since $\rho_i$ is flat, we have 
$$ \alpha_*\O_{E_0\times E_1} \cong {\rho_i}^*{F_{E_i}}_*\O_{E_i}\cong{\rho_i}^*(\O_{E_i}\oplus\L_i)\cong \O_A\oplus{\rho_i}^*\L_i,$$
so ${\rho_1}^*\L_1\cong{\rho_0}^*\L_0$. 
Let $\M_i$ be a line bundle on $E_i$ such that ${\M_i}^2\cong\L_i$. 
Let $\M:={\rho_0}^*\M_0\otimes{\rho_1}^*\M_1$.
Then $$ \M^2 \cong{\rho_0}^*{\M_0}^2\otimes{\rho_1}^*{\M_1}^2 \cong{\rho_0}^*\L_0\otimes{\rho_1}^*\L_1 \cong({\rho_0}^*\L_0)^2 \cong\O_A.  $$
We denote by $\sigma:A\to A$ the action of $\Z/2\Z$ on $A$.
Since ${(-1)_{E_1}}^*\M_1\cong {\M_1}^{-1}\cong {\M_1}^3\cong \L_1\otimes\M_1$, we have 
\begin{align*}
\sigma^*\M \cong \sigma^*{\rho_0}^*\M_0\otimes\sigma^*{\rho_1}^*{\M_1} \cong {\rho_0}^*\M_0\otimes{\rho_1}^*{\M_1}^3
\cong \M\otimes{\rho_1}^*\L_1 \ncong \M,
\end{align*}
which implies that $\M$ is not the pullback of a line bundle on $X\cong A/(\Z/2\Z)$. 
Hence, by Lemma~\ref{lem:et_cov}, we see that ${F_X}_*\O_X$ is not a direct sum of line bundles. \par
We consider the case of a1), or equivalently, the case when $X$ is an Igusa surface.
First, we show that $X$ is $F$-split. 
Consider the composite 
\begin{align*}
\O_{X}\to {F_X}_*\O_X \to \O_X \tag{\ref{section:surf}.1}
\end{align*}
of the Frobenius map and the canonical dual of the Frobenius map.
Since $Hom(\O_X, \O_X)=k$, it is enough to show that this composite is nonzero. 
Taking the pullback to $E_0\times E_1=A$, we get
\begin{align*}
\O_{A}\to {F_A}_*\O_A \to \O_A.  \tag{\ref{section:surf}.2}
\end{align*}
Since the pullback of $\omega_X$ is isomorphic to $\omega_A$, 
the second map of (\ref{section:surf}.2) is isomorphic to the canonical dual of the Frobenius map on $A$.
In particular, the composite (\ref{section:surf}.2) is nonzero, and hence so is (\ref{section:surf}.1). 
Second, we show that $\dim H^0(X,\Omega^1_X)=2$. 
Note that the action of $G$ on each coordinate is a translation or a multiplication by $-1$.
It is easy to see that both of the action is trivial on the cotangent bundle. 
Hence, $\Omega^1_X$ is trivial, and the assertion holds. 
Therefore, by Theorem~\ref{thm:ch_triv_intro}, 
we see that ${F_X}_*\O_X$ is a direct sum of line bundles, 
which completes the proof.
\end{proof}
\section{Higher-dimensional examples}\label{section:higher}
In this section, we give examples of higher-dimensional non-abelian varieties 
such that $K_X$ is pseudo-effective and $F_*\O_X$ splits into a direct sum of line bundles. 
The construction of each example is essentially based on the one of Igusa surfaces. 
Throughout this section, we assume that the base field is of characteristic 2. 
\begin{eg}
Let $A$ be an ordinary abelian variety of dimension $d\ge 1$. 
Then the group $A[2]$ of $2$-torsion points in $A$ is isomorphic to $(\Z/2\Z)^{\oplus d}$. 
Let $\{a_1,\ldots,a_d\}$ be a basis of the $\mathbb{F}_2$-vector space $A[2]$. 
Let $B$ be an ordinary abelian variety and $\rho_B:B\to B$ be a homomorphism. 
Assume that ${\rho_B}^2=1_B$ and $\rho_B$ acts trivially on $H^0(B,\Omega_B^1)$. 
For instance, if we set $\rho_B:=(-1)_B$, then these assumptions hold.
Let $c$ be an integer with $1\le c\le d$. 
We define the action of $G:=(\Z/2\Z)^{\oplus c}$ on $A\times B$ as follows. 
For every $g=(g_1,\ldots,g_c)\in G$, 
$$g\cdot(x,y):=(x+g_1a_1+\cdots+g_ca_c,{\rho_B}^{g_1+\cdots+g_c}(y)).$$
It is easily seen that this action is free, so the quotient $X:=(A\times B)/G$ is smooth.
By an argument similar to the case of a1) in the proof of Theorem~\ref{igusa}, we obtain the following:
\begin{itemize}
\item ${F_*}\O_X$ is isomorphic to a direct sum of line bundles.
\item $K_X$ is trivial.
\end{itemize}
Note that this construction is the same as that of Igusa surfaces in the case when $A$ and $B$ are elliptic curves and $\rho_B=(-1)_B$.
\end{eg}
The next example is one that has a Galois \'etale cover $\pi:A\to X$ satisfying the following conditions:
\begin{itemize}
\item $A$ is an abelian variety which is not isomorphic to a nontrivial direct product of abelian varieties.
\item There is no nontrivial factorization $\pi:A\to A'\xrightarrow{\pi'}X$ 
with Galois \'etale cover $\pi':A'\to X$ from an abelian variety $A'$.
\end{itemize}
\begin{eg}
Let $B$ and $C$ be simple ordinary abelian varieties. 
Assume that $\dim B\ge 2$. Then the number of torsion points of order 2 in $B$ is $2^{\dim B}-1\ge 2$. 
Let $b, b'\in B$ and $c\in C$ be distinct torsion points of order 2.
We define the action of $G':=(\Z/2\Z)\oplus(\Z/2\Z)$ on $B\times C$ as 
$$(1,0)\cdot(x,y):=(x+b,y+c) \textup{\quad and\quad } (0,1)\cdot(x,y):=(x+b',-c).$$
Clearly, this action is free. We set $H'$ to be the normal subgroup $\{(0,0), (1,0)\}$ of $G'$. 
Then $H'$ acts on $B\times C$ by translation, so the quotient $A:=(B\times C)/H'$ is also an ordinary abelian variety.
Note that since the degree of $B\times C\to A$ is 2, one can show that $A$ is not a nontrivial direct product of abelian varieties.
Let $X:=A/G.$ Then $X$ also can be written as $(B\times C)/G'$. By an argument similar to the case of a1) in the proof of Theorem~\ref{igusa}, we see that 
\begin{itemize}
\item ${F_*}\O_X$ is isomorphic to a direct sum of line bundles, and 
\item $K_X$ is trivial.
\end{itemize}
\end{eg}
\bibliographystyle{abbrv}
\bibliography{decomp}
\end{document}